\documentclass{article}




\usepackage{ifthen}
\newboolean{neurips_version}
\setboolean{neurips_version}{true}
\ifthenelse{\boolean{neurips_version}}
{\PassOptionsToPackage{numbers, compress}{natbib}
 \usepackage[preprint]{neurips_2022}
}
{
\usepackage{PRIMEarxiv}
 \usepackage[numbers]{natbib}
}


\usepackage[utf8]{inputenc} 
\usepackage[T1]{fontenc}    
\usepackage{hyperref}       
\usepackage{url}            
\usepackage{booktabs}       
\usepackage{amsfonts,amsmath,amsthm,amssymb}       
\usepackage{nicefrac}       
\usepackage{microtype}      
\usepackage{xcolor}         
\usepackage[disable]{todonotes}
\usepackage{enumitem}
\usepackage{caption}

\usepackage{mathtools} 
\usepackage{tikz}
\usetikzlibrary{calc}
\usepackage{pgfplots}
\usepackage{graphicx}

\usepackage{amsmath}
\usepackage{algorithm, algpseudocode}
\usepackage{amsfonts}  

\usepackage[english]{babel}
\usepackage{amsthm}

\usepackage{subfig}
\usepackage[export]{adjustbox}

\usepackage{hhline}
\usepackage{makecell, caption, booktabs}
\usepackage{siunitx}

\usepackage{multirow}

\usepackage{amsmath, nccmath}
\usepackage{bigstrut}

\usepackage{booktabs}

\usepackage{lipsum}
\graphicspath{{media/}}     

\usepackage{CJKutf8}
\usepackage[framed,numbered,autolinebreaks,useliterate]{mcode}

\title{ Simplified Quadratic Gradient: A Unified Framework Bridging Gradient Descent and Newton-Type Methods by Synthesizing Hessians and Gradients}

%

\author{ \href{https://orcid.org/0000-0003-0378-0607}{\includegraphics[scale=0.06]{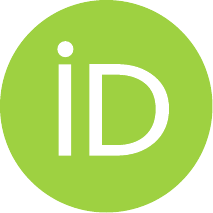}\hspace{1mm}John Chiang} \\                             
                                      \\
	\texttt{john.chiang.smith@gmail.com} 
}

\date{}

\newtheorem{theorem}{Theorem}

\newtheorem{lemma}[theorem]{Lemma}

\newtheorem{definition}{Definition}[section]

\theoremstyle{remark}
\newtheorem{remark}{Remark}[section]

\renewcommand{\hat}{\widehat}
\renewcommand{\epsilon}{\varepsilon}

\makeatletter
\def\namedlabel#1#2{\begingroup
    #2%
    \def\@currentlabel{#2}%
    \phantomsection\label{#1}\endgroup
}
\makeatother

\algnewcommand{\LeftComment}[1]{\Statex \(\triangleright\) #1}
\algnewcommand{\LineCommentStep}[1]{\Statex \textbf{[Step #1]:} }
\makeatletter
\newlength{\trianglerightwidth}
\settowidth{\trianglerightwidth}{$\triangleright$~}
\algnewcommand{\LineComment}[1]{\Statex \hskip\ALG@thistlm $\triangleright$ #1}
\algnewcommand{\LineCommentCont}[1]{\Statex \hskip\ALG@thistlm%
  \parbox[t]{\dimexpr\linewidth-\ALG@thistlm}{\hangindent=\trianglerightwidth \hangafter=1 \strut$\triangleright$ #1\strut}}
\algnewcommand{\LeftLineCommentCont}[1]{\Statex \hskip\ALG@thistlm%
  \parbox[t]{\dimexpr\linewidth-\ALG@thistlm}{\leftskip=\algorithmicindent \hangindent=\trianglerightwidth \hangafter=1 \strut$\triangleright$ #1\strut}}

\begin{document}

\maketitle

\begin{abstract}%

Accelerating the convergence of second-order optimization, particularly Newton-type methods, remains a pivotal challenge in algorithmic research. In this paper, we extend previous work on the \textbf{Quadratic Gradient (QG)} and rigorously validate its applicability to general convex numerical optimization problems. We introduce a novel variant of the Quadratic Gradient that departs from the conventional fixed Hessian Newton framework. We present a new way to build a new version of the quadratic gradient. This new quadratic gradient doesn't satisfy the convergence conditions of the fixed Hessian Newton's method. However, experimental results show that it sometimes has a better performance than the original one in convergence rate.  While this variant relaxes certain classical convergence constraints, it maintains a positive-definite Hessian proxy and demonstrates comparable, or in some cases superior, empirical performance in convergence rates. Furthermore, we demonstrate that both the original and the proposed QG variants can be effectively applied to non-convex optimization landscapes. A key motivation of our work is the limitation of traditional scalar learning rates. We argue that a diagonal matrix can more effectively accelerate gradient elements at heterogeneous rates.  Our findings establish the Quadratic Gradient as a versatile and potent framework for modern optimization.  Furthermore, we integrate Hutchinson's Estimator to estimate the Hessian diagonal efficiently via Hessian-vector products. Notably, we demonstrate that the proposed Quadratic Gradient variant is highly effective for Deep Learning architectures, providing a robust second-order alternative to standard adaptive optimizers. 

\todo{Show precise somewhere that our results are new in the iid setting as well.}

\end{abstract}

\listoftodos

\section{Introduction}

Modern numerical optimization often faces a trade-off between the computational simplicity of first-order methods (e.g., SGD) and the rapid convergence of second-order methods (e.g., Newton's method). First-order methods are scalable but struggle with ill-conditioned curvatures and saddle points. Conversely, Newton's method provides superior directionality but is computationally prohibitive for large-scale problems and sensitive to Hessian indefiniteness.

In this paper, we present the \textbf{Quadratic Gradient} as a synthesis of these two paradigms. By moving beyond a scalar learning rate to a structural diagonal matrix derived from curvature, we create a "bridge" that allows for adaptive, dimension-wise acceleration.


In numerical optimization, the efficiency of Gradient Descent (GD) is heavily dictated by the choice of the learning rate, typically a single scalar $\alpha$. However, using a single floating-point number to scale all elements of a gradient vector is often inadequate for ill-conditioned problems. As the curvature varies across different dimensions, a more sophisticated mechanism is required¡ªone that can accelerate each gradient element at heterogeneous rates. 

While Newton's method addresses this by employing the inverse Hessian $\nabla^2 f(\mathbf{x})^{-1}$, its high computational cost necessitates the use of ``Hessian proxies.'' We argue that the optimal acceleration lies in a structural middle ground: moving from a mere scalar to a diagonal matrix that synthesizes curvature and direction.

In this paper, we bridge the gap between first-order simplicity and second-order precision through: (1) Generalization to general convex numerical optimization; (2) A novel QG variant that ensures the Hessian proxy remains positive-definite; (3) Extension to non-convex landscapes including the Rosenbrock and Rastrigin benchmarks; and (4) Theoretical proof of the fundamental relation between the learning rate and the Hessian¡¯s spectrum.

\subsection{Background}
Chiang~\cite{chiang2022privacy} introduced a novel gradient variant called quadratic gradient which could abstract the second-order information of the curve into the first-order information gradient. He also applied this gradient variant to the naive NAG method and obtained an enhanced NAG method for the binary logistic regression training. Experiments show that the enhanced NAG method converges faster than the raw NAG method. 

Chiang~\cite{chiang2022multinomial} extended this work of binary LR training to multiclass LR training based on the fixed Hessian method of the multiclass LR model. Chiang~\cite{chiang2022multinomial} also presented the enhanced Adagrad method and applied it to multiclass LR training. From the construction of the quadratic gradient, Chiang~\cite{chiang2022multinomial} inferred that  $\frac{1}{2} X^T X$ can be a good fixed lower bound.

In this work, we first point out that there does exist a relation between the Hessian matrix and the learning rate of the first-order gradient (descent) method. That is, the eigenvalues of the Hessian matrix can provide reference information to the setting of the learning rate of the naive gradient descent methods. We then proposed the enhanced Adam method via quadratic gradient for general numerical optimation problems and test this enhanced method on several optimization functions. Finally, we provide a new way to construct a new  quadratic gradient. We test this new quadratic-gradient version and the original one by using the enhanced Adam method on several optimization functions.

\section{Preliminaries}

The Newton-Raphson method updates parameters using:
\begin{equation}
    \mathbf{x}_{k+1} = \mathbf{x}_k - [\nabla^2 f(\mathbf{x}_k)]^{-1} \nabla f(\mathbf{x}_k)
\end{equation}
To reduce cost, the \textbf{Simplified Fixed Hessian (SFH)} method keeps the Hessian constant: $\mathbf{H}_k = \mathbf{H}_{fixed}$. While efficient, SFH is sensitive to the initial matrix choice and often converges only linearly.

\subsection{Simplified Fixed Hessian}
The SFH method reduces the cost of Newton's iteration by keeping the Hessian $\mathbf{H}$ constant for several steps: $\mathbf{x}_{k+1} = \mathbf{x}_k - \mathbf{H}_{fixed}^{-1} \nabla f(\mathbf{x}_k)$. While lowering per-iteration cost, it is often unstable in non-convex regions.

\subsection{Original Quadratic Gradient}
Building on diagonal scaling, Chiang introduced the Quadratic Gradient: $\mathbf{g}_{q} = \mathbf{D} \nabla f(\mathbf{x})$, where $\mathbf{D}$ acts as a proxy for the inverse Hessian. This paper extends this work to a generalized framework applicable to both convex and non-convex numerical optimization.


\subsection{Quadratic Gradient Algorithms}

\subsection{Fully Homomorphic Encryption}

\subsubsection{Logistic Regression Model}

\section{Methodology}

\subsection{Simplified Quadratic Gradient}
We define the new variant $\mathbf{g}_{qq}$ as:
\begin{equation}
    \mathbf{g}_{qq} = \text{diag}\left( \frac{1}{|\text{diag}(\nabla^2 f(\mathbf{x}))| + \epsilon} \right) \nabla f(\mathbf{x})
\end{equation}
This formulation ensures positive definiteness and avoids the instability typically associated with negative curvature in non-convex optimization.

\subsection{Hessian Diagonal Approximation}

To scale the framework for large-scale optimization and deep learning, we utilize Hutchinson's Estimator\cite{hutchinson1989stochastic} to approximate the Hessian diagonal. By leveraging this stochastic approximation, our Simplified Quadratic Gradient framework effectively bypasses the prohibitive computational cost of explicit Hessian construction. This scalability allows the SQG approach to be seamlessly integrated into deep learning training pipelines, providing curvature-aware updates for high-dimensional parameter spaces where traditional second-order methods are typically computationally infeasible.

\subsubsection{Hutchinson's Estimator}
The diagonal elements are estimated via: $\text{diag}(\mathbf{H}) \approx \mathbb{E} [ \mathbf{z} \odot (\mathbf{H}\mathbf{z}) ]$, where $\mathbf{z}$ is a Rademacher random vector. This requires only Hessian-vector products, keeping the computational overhead approximately $2\times$ that of a standard gradient update.

\subsubsection{Theoretical Mechanism}
Hutchinson's Estimator transforms the prohibitive $O(n^2)$ Hessian computation into a scalable $O(n)$ operation. Given a Rademacher random vector $\mathbf{z}$ where $z_i \in \{1, -1\}$, the diagonal of the Hessian $\mathbf{H}$ can be estimated as:
\begin{equation}
    \text{diag}(\mathbf{H}) \approx \mathbb{E} [ \mathbf{z} \odot (\mathbf{H}\mathbf{z}) ]
\end{equation}
By leveraging Hessian-vector products (HVP) computed via the Pearlmutter trick \cite{pearlmutter1994fast}, the framework synthesizes local curvature information with minimal overhead. Specifically, this process requires only two gradient computations per step, maintaining the "Newton-level precision" promised by the Quadratic Gradient without sacrificing the "gradient-level efficiency" required for deep learning.

\subsubsection{Deep Learning Training}
To extend the Quadratic Gradient framework to large-scale optimization, we incorporate the stochastic Hutchinson's estimator as utilized in AdaHessian \cite{adahessian}. By leveraging this method to approximate the Hessian diagonal, our Simplified Quadratic Gradient can be efficiently constructed within modern deep learning training pipelines. 

This formulation enables the practical integration of second-order optimization principles into complex neural networks—a task that remains computationally prohibitive for the original quadratic gradient due to its reliance on the explicit computation of full Hessian matrices. By substituting the dense Hessian with a scalable surrogate via the Hessian-trace approximation, our SQG framework bridges the gap between high-order optimization theory and the high-dimensional requirements of deep learning, maintaining a computational overhead comparable to standard first-order methods.

\subsection{Comparison with Baseline}
In original studies, QG demonstrated significant convergence velocity in Binary Logistic Regression across multiple datasets. Our proposed variant retains high computational efficiency by offering a more streamlined architectural design without significantly compromising predictive or convergence performance.

\noindent \textbf{Metrics and Objective Functions.} 
To ensure a rigorous comparison with the original quadratic gradient framework, we conduct logistic regression training using the same experimental settings. We employ the \textbf{log-likelihood function} $\ell(\boldsymbol{\beta})$ under \textbf{Maximum Likelihood Estimation (MLE)} as the primary performance metric to quantify optimization efficiency. This selection facilitates a direct observation of the convergence trajectory. 

\noindent \textbf{Experimental Setup and Datasets.} 
The proposed algorithms are evaluated in a non-encrypted environment using Python on Google Colab. The core objective is to accelerate training convergence, as measured by the log-likelihood function $\ell(\boldsymbol{\beta})$.

Our evaluation covers six optimization algorithms: NAG, AdaGrad, Adam, and their two respective quadratic-gradient enhanced counterparts (denoted as \textbf{NAG + OQG}, \textbf{NAG + SQG}, \textbf{AdaGrad + OQG}, \textbf{AdaGrad + SQG}, \textbf{Adam + OQG} and \textbf{Adam + SqG}). These methods are benchmarked on datasets established by Kim et al. \cite{IDASH2018Andrey}, including the iDASH genomic dataset, the Myocardial Infarction dataset (edin), Low Birth Weight Study (lbw), NHANES III (nhanes3), Prostate Cancer Study (pcs), and the Umaru Impact Study (uis). Specifically, the iDASH dataset consists of 1,579 records with 103 binary genotypes and a binary phenotype. 

To evaluate scalability, we further test these algorithms on two large-scale datasets from \cite{han2018efficient}: a real-world financial dataset (422,108 samples, 200 features) and a restructured MNIST dataset (11,982 samples, 196 features). Throughout all experiments, we consistently utilize the fixed Hessian approximation $\bar{\mathbf{H}} = - \frac{1}{4} \mathbf{X}^{\top} \mathbf{X}$ to construct the diagonal surrogate $\tilde{\mathbf{B}}$.

\noindent \textbf{Hyperparameter Configuration.} 
For a fair comparison with the baseline NAG \cite{IDASH2018Andrey}, Both Enhanced NAGs adopts a decaying learning rate of $1 + \frac{10}{1 + t}$, mirroring the structure used in the original study. For Enhanced AdaGrads, we employ a modified learning rate of $\eta_t = 0.1$, whereas the standard AdaGrad uses $0.01$. Similarly, Both Enhanced Adams utilize an adjusted learning rate of $\alpha = 0.01$ (the original is $0.001$), while maintaining standard momentum parameters ($\beta_1 = 0.9, \beta_2 = 0.999$). In contrast, the baseline Adam follows its default configuration of $\alpha = 0.001$ \cite{kingma2014adam}.

\noindent \textbf{Performance Discussion.} 

As illustrated in the experimental results~\ref{fig0, fig1, fig2}, the two quadratic gradient variants for AdaGrad and Adam share a highly consistent parameter structure and exhibit remarkably similar convergence performance. Given that our Simplified Quadratic Gradient framework offers a significantly more streamlined construction process while maintaining comparable efficiency, we conclude that the SQG approach is particularly well-suited for integration with adaptive optimization methods like AdaGrad and Adam.

The observed instability and divergence in the SQG-enhanced NAG variant are not unexpected. From a theoretical perspective, the convergence of the original quadratic gradient algorithm is anchored in the Fixed Hessian framework, which permits a learning rate lower bound as high as $1$. However, our SQG framework does not strictly satisfy the stringent convergence conditions required by the Fixed Hessian method. 

Specifically, unlike the original method where the Hessian approximation provides a stable curvature estimate, the simplified nature of SQG necessitates a more conservative step-size policy for NAG-based updates. Consequently, while the original framework allows for a persistent learning rate, the SQG-enhanced NAG requires a learning rate that eventually decays below $1$ and asymptotically approaches $0$ to ensure numerical stability and global convergence.

\begin{figure}[htbp]
\centering
\captionsetup[subfigure]{justification=centering}

\subfloat[The iDASH dataset]{%
    \includegraphics[width=0.48\textwidth]{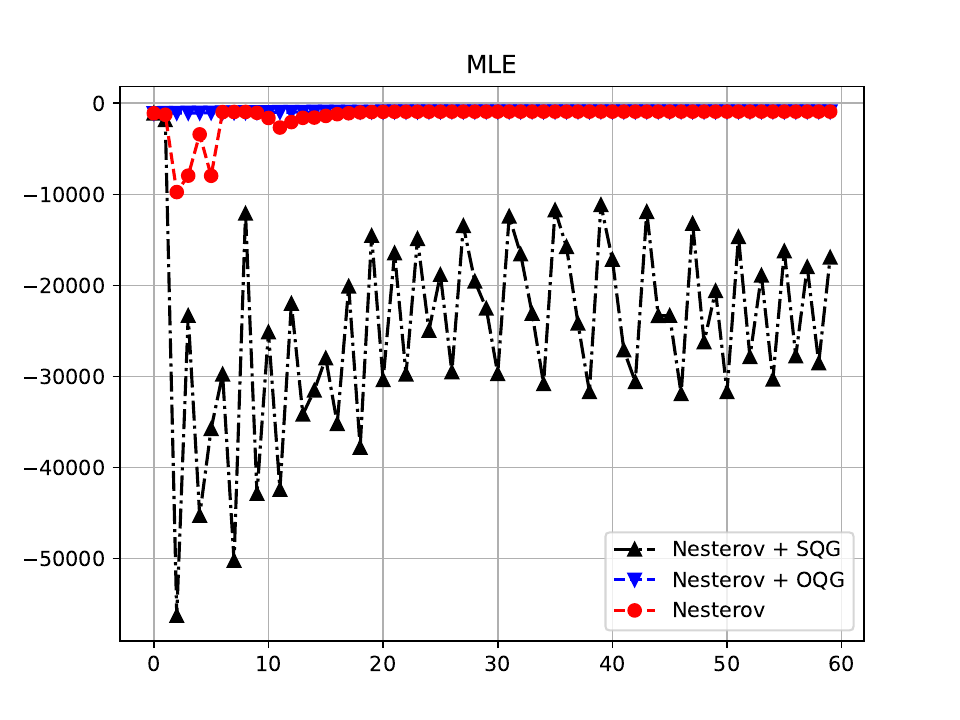}
    \label{fig:subfig01}
}
\hfill
\subfloat[The Edinburgh datasetn]{%
    \includegraphics[width=0.48\textwidth]{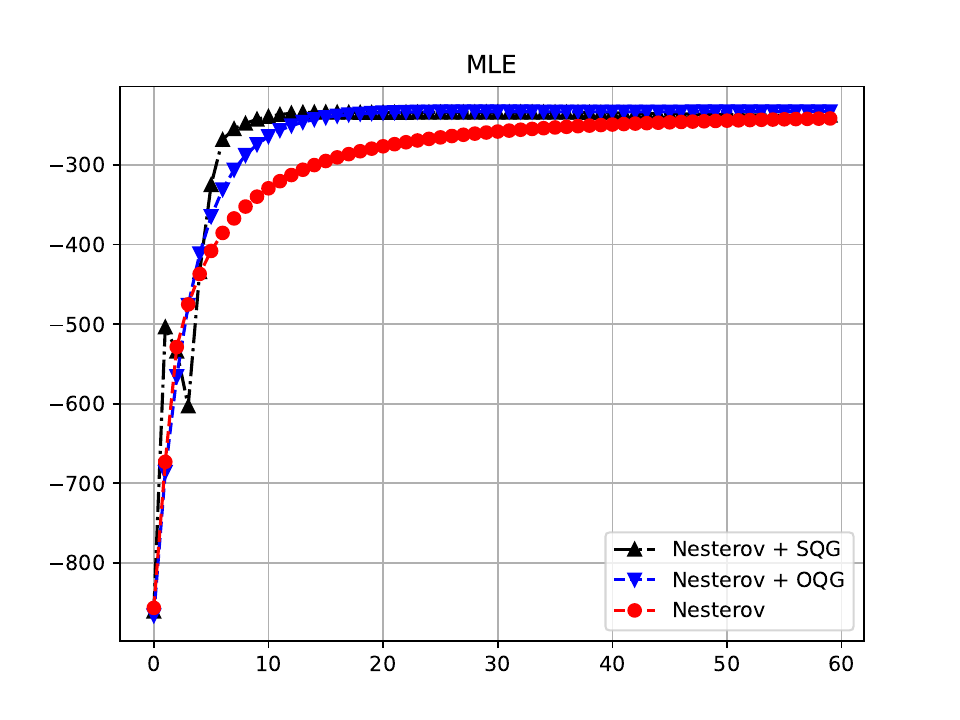}
    \label{fig:subfig02}
}

\vspace{1em} 

\subfloat[The lbw dataset]{%
    \includegraphics[width=0.48\textwidth]{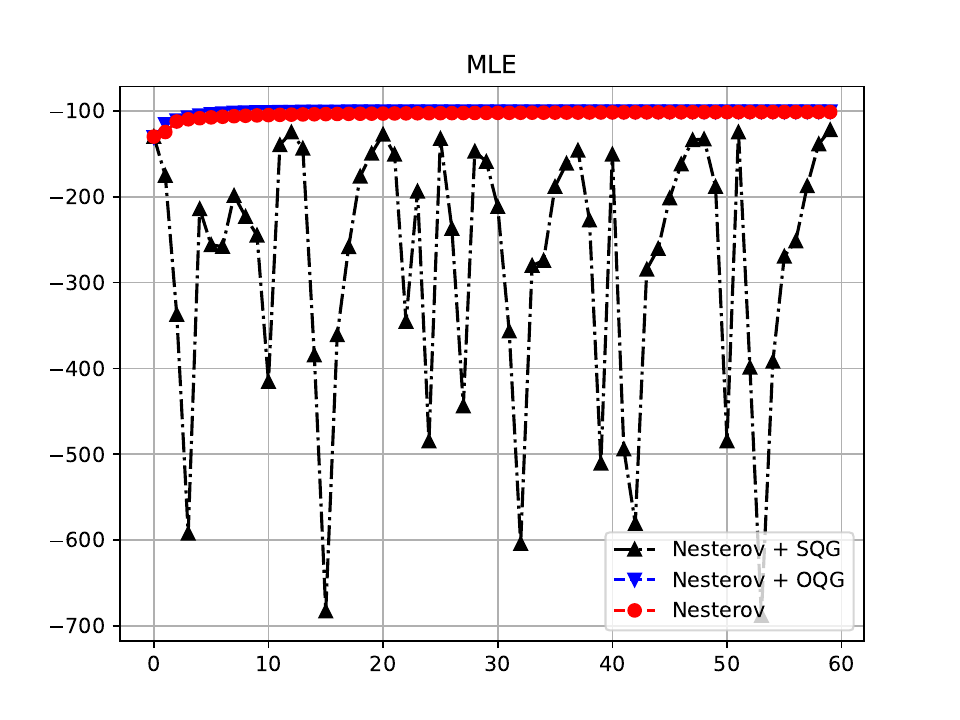}
    \label{fig:subfig03}
}
\hfill
\subfloat[The nhanes3 dataset]{%
    \includegraphics[width=0.48\textwidth]{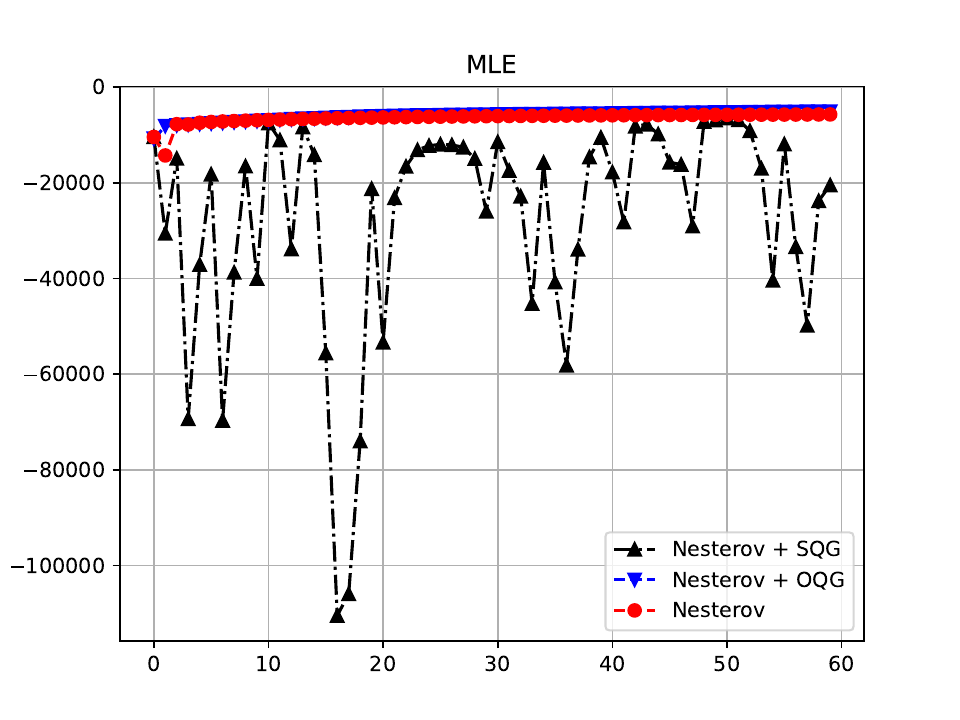}
    \label{fig:subfig04}
}

\vspace{1em} 

\subfloat[The pcs dataset]{%
    \includegraphics[width=0.48\textwidth]{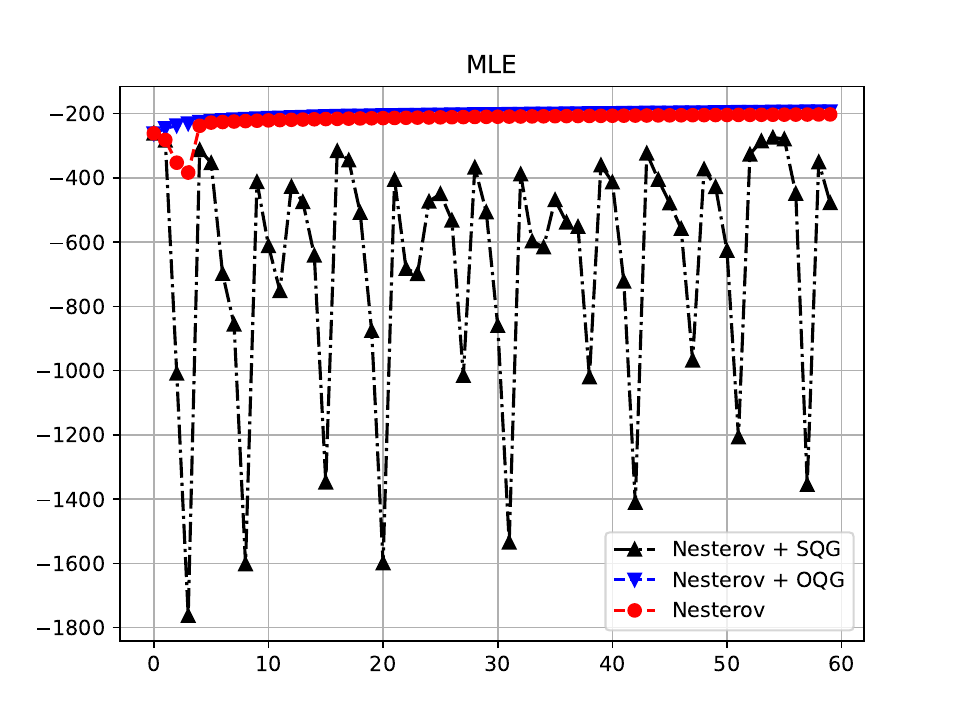}
    \label{fig:subfig03}
}
\hfill
\subfloat[The uis dataset]{%
    \includegraphics[width=0.48\textwidth]{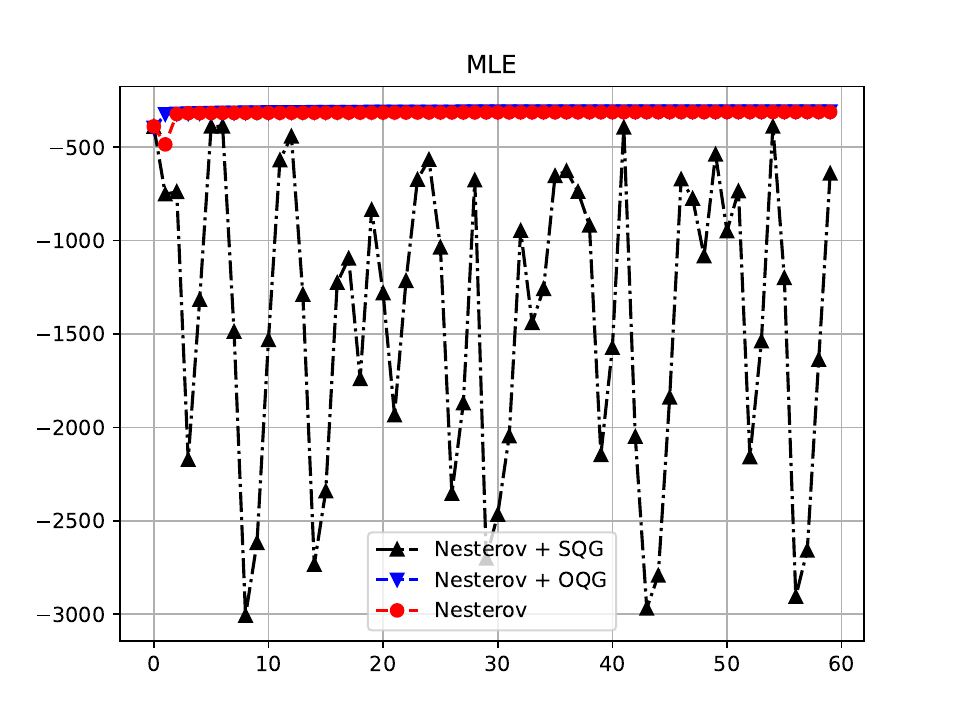}
    \label{fig:subfig04}
}

\vspace{1em} 

\subfloat[restructured MNIST dataset]{%
    \includegraphics[width=0.48\textwidth]{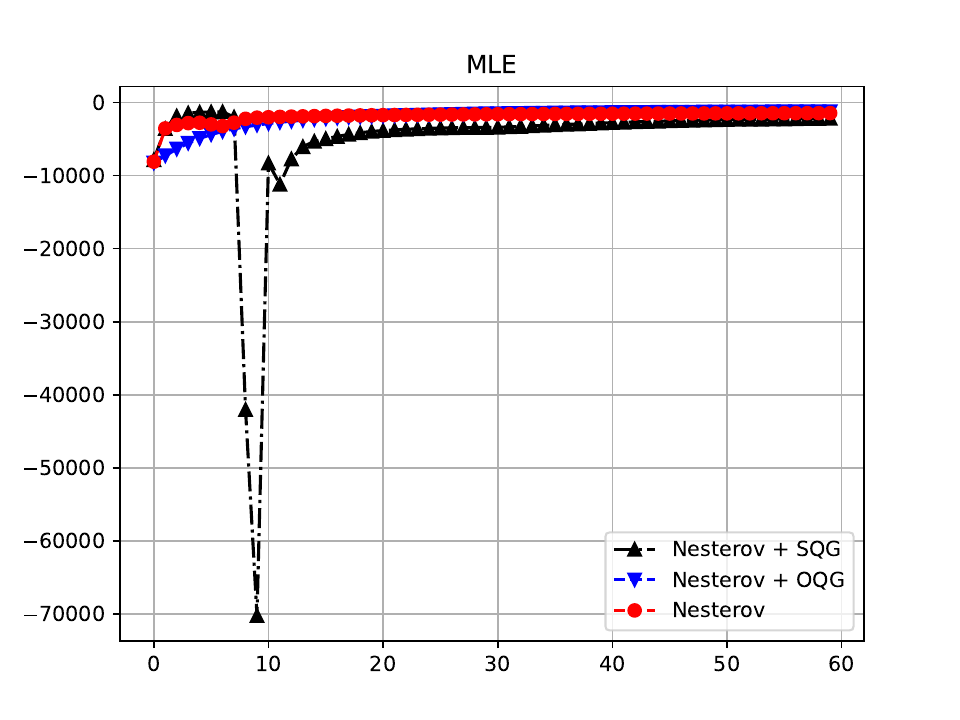}
    \label{fig:subfig03}
}
\hfill
\subfloat[The private financial dataset]{%
    \includegraphics[width=0.48\textwidth]{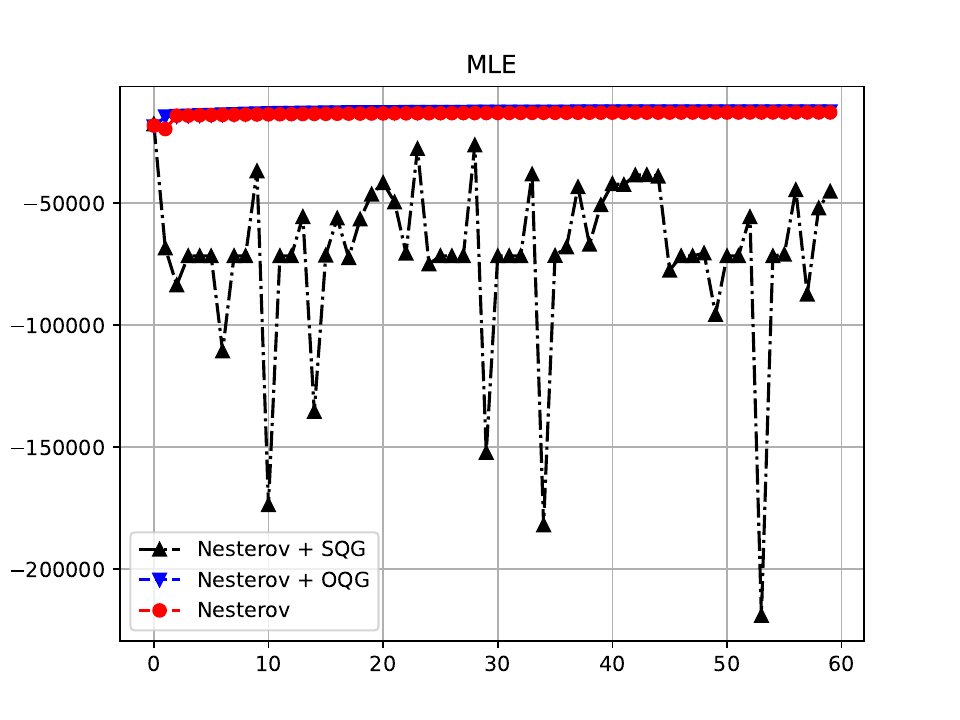}
    \label{fig:subfig04}
}

\caption{The training results of NAG + SQG vs. NAG + OQG vs. NAG in the clear domain.}
\label{fig0}
\end{figure}

\begin{figure}[htbp]
\centering
\captionsetup[subfigure]{justification=centering}

\subfloat[The iDASH dataset]{%
    \includegraphics[width=0.48\textwidth]{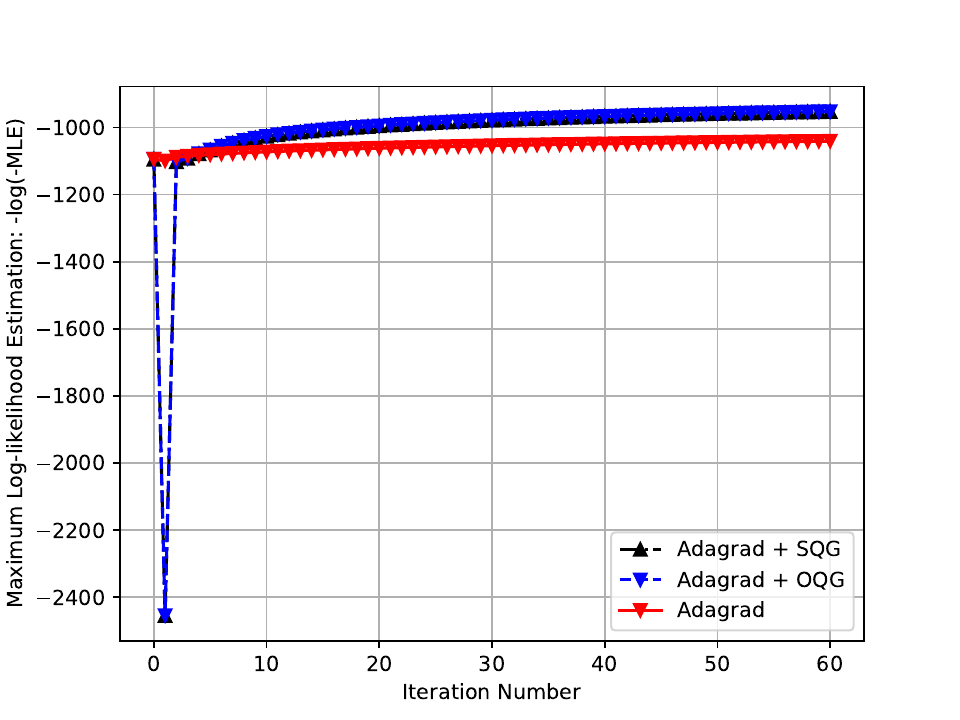}
    \label{fig:subfig01}
}
\hfill
\subfloat[The Edinburgh datasetn]{%
    \includegraphics[width=0.48\textwidth]{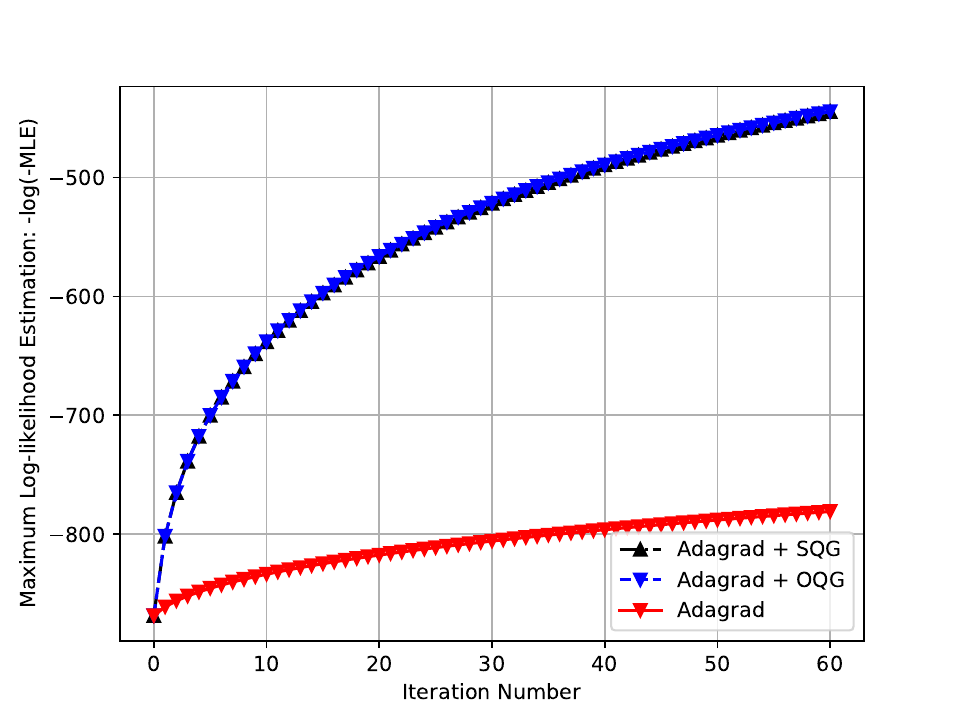}
    \label{fig:subfig02}
}

\vspace{1em} 

\subfloat[The lbw dataset]{%
    \includegraphics[width=0.48\textwidth]{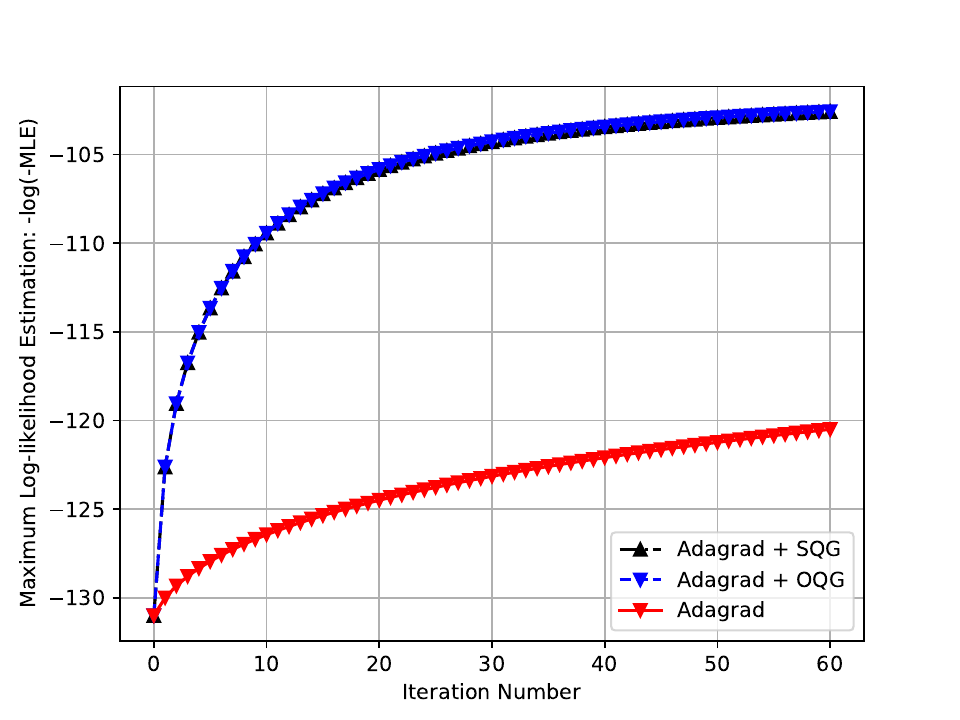}
    \label{fig:subfig03}
}
\hfill
\subfloat[The nhanes3 dataset]{%
    \includegraphics[width=0.48\textwidth]{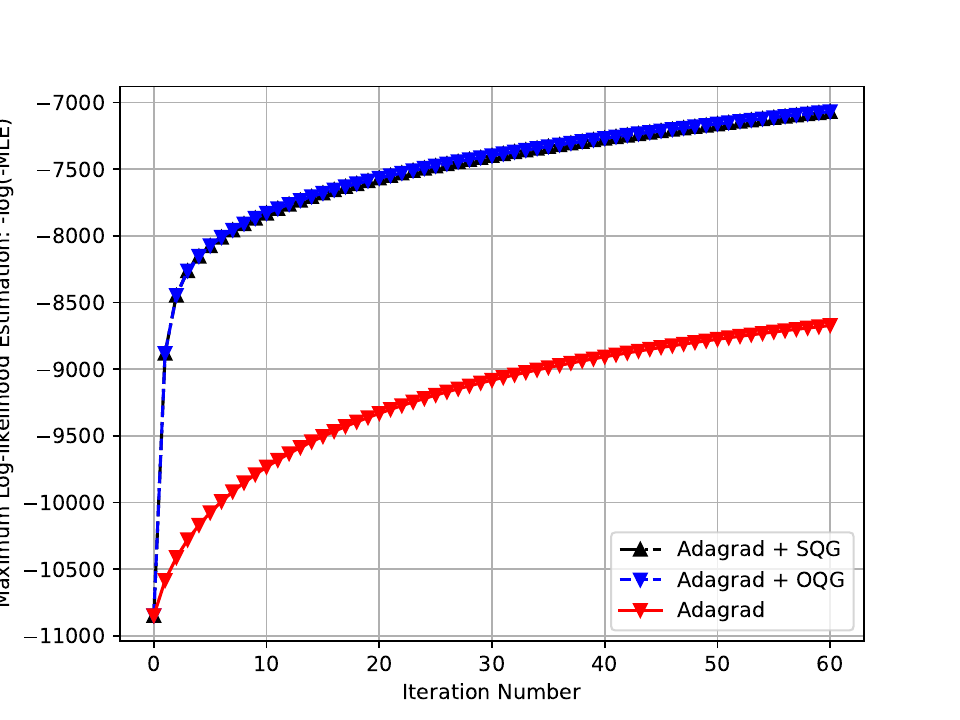}
    \label{fig:subfig04}
}

\vspace{1em} 

\subfloat[The pcs dataset]{%
    \includegraphics[width=0.48\textwidth]{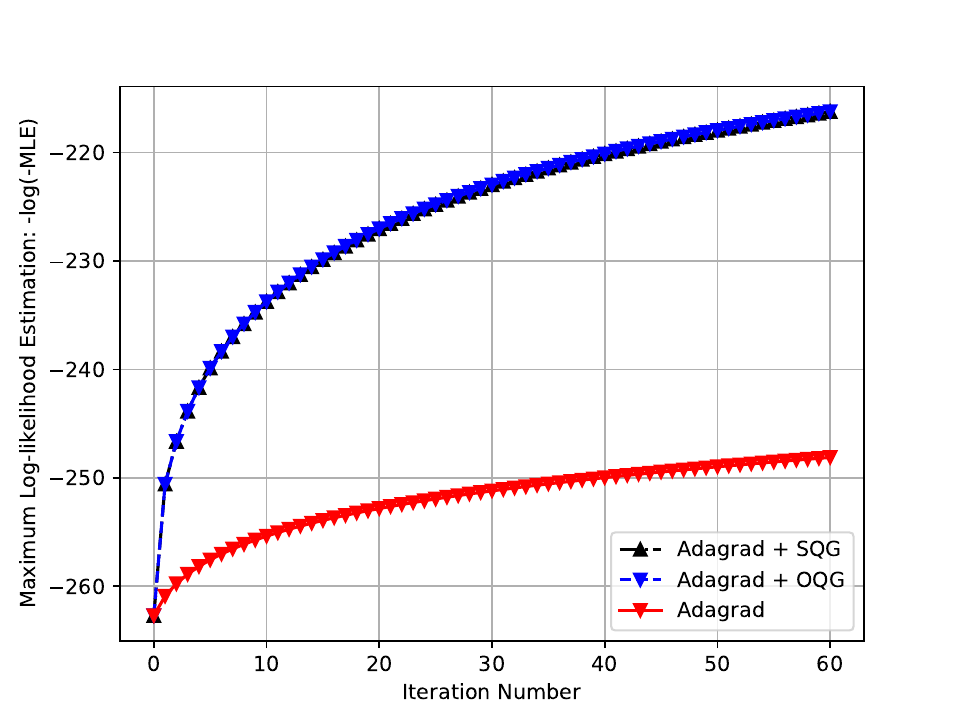}
    \label{fig:subfig03}
}
\hfill
\subfloat[The uis dataset]{%
    \includegraphics[width=0.48\textwidth]{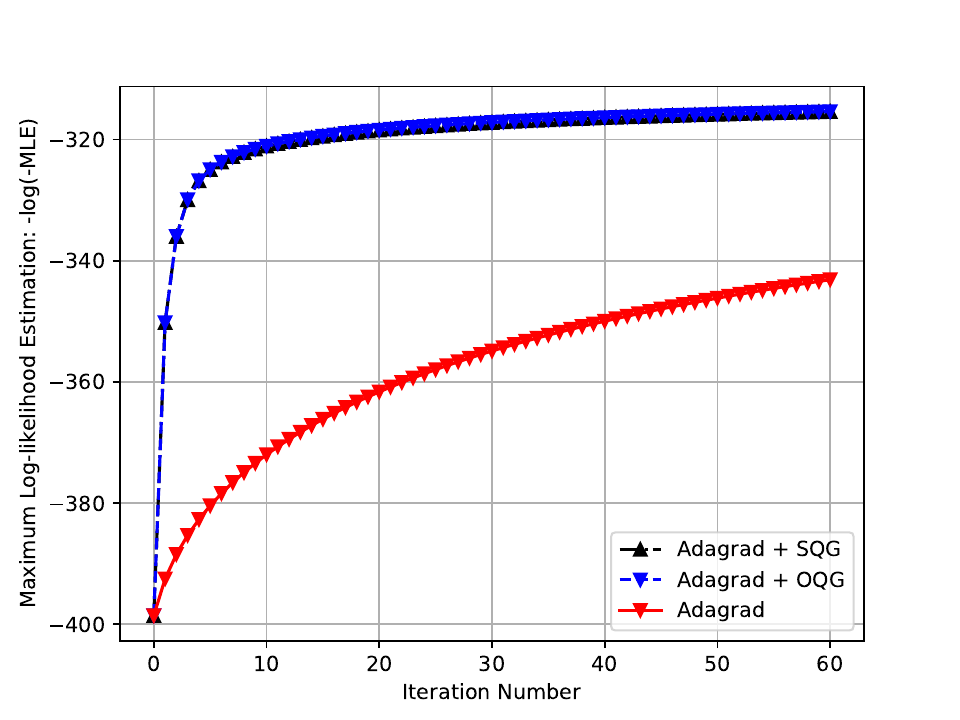}
    \label{fig:subfig04}
}

\vspace{1em} 

\subfloat[restructured MNIST dataset]{%
    \includegraphics[width=0.48\textwidth]{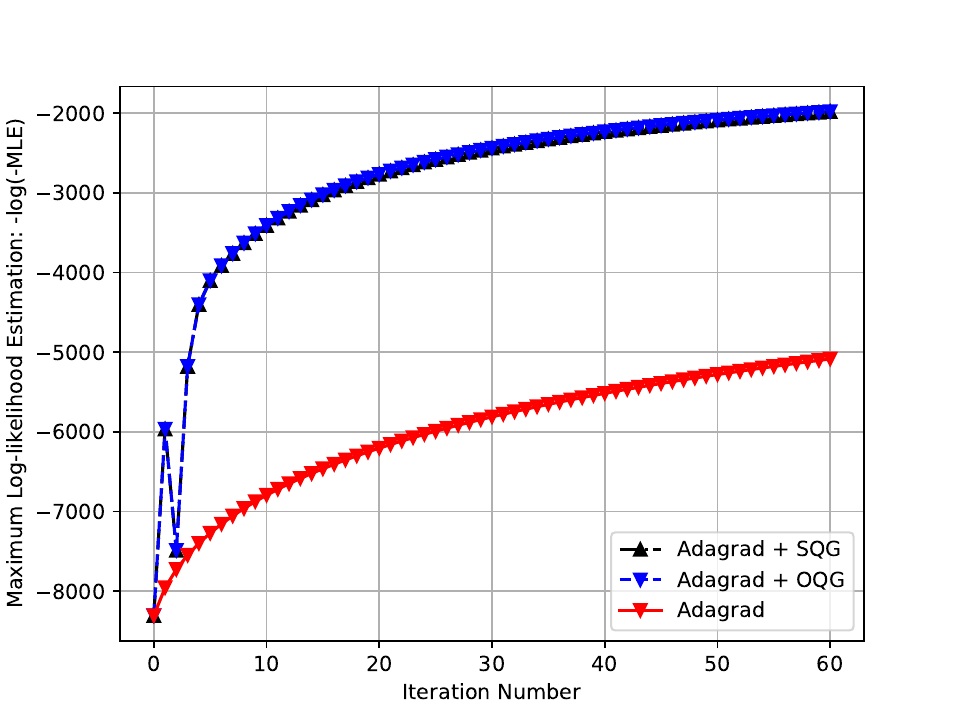}
    \label{fig:subfig03}
}
\hfill
\subfloat[The private financial dataset]{%
    \includegraphics[width=0.48\textwidth]{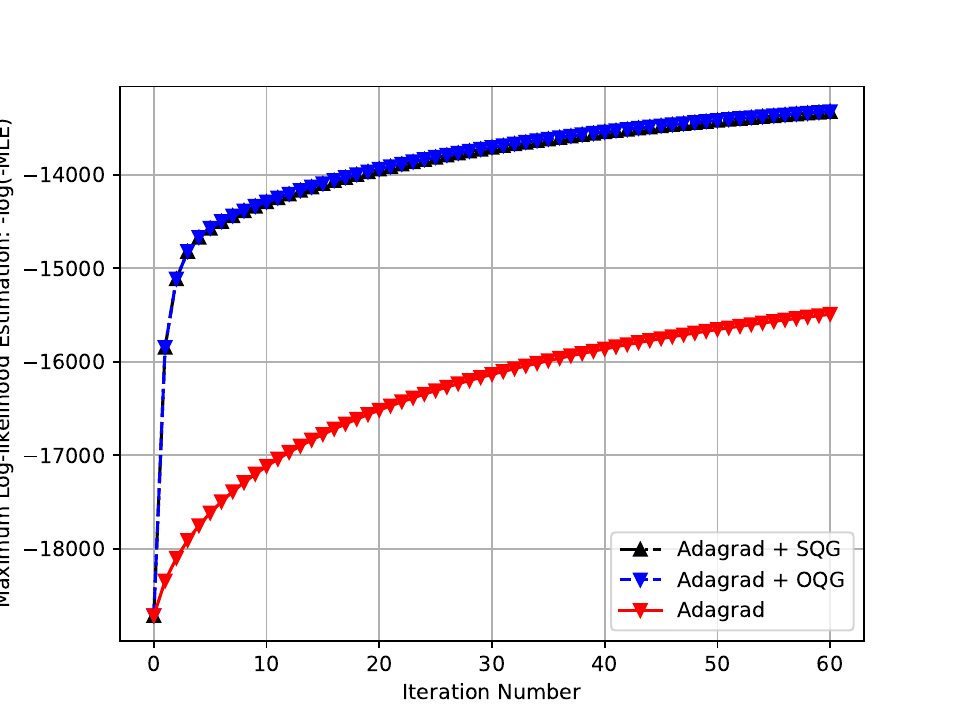}
    \label{fig:subfig04}
}

\caption{The training results of AdaGrad + SQG vs. AdaGrad + OQG vs. AdaGrad in the clear domain.}
\label{fig1}
\end{figure}

\begin{figure}[htbp]
\centering
\captionsetup[subfigure]{justification=centering}

\subfloat[The iDASH dataset]{%
    \includegraphics[width=0.48\textwidth]{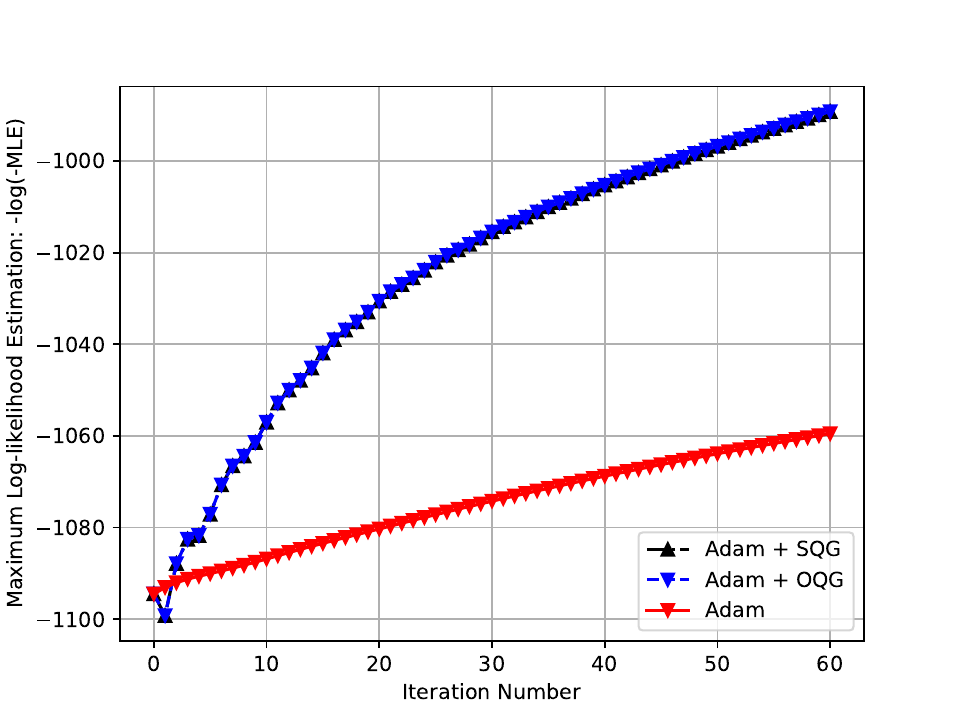}
    \label{fig:subfig01}
}
\hfill
\subfloat[The Edinburgh datasetn]{%
    \includegraphics[width=0.48\textwidth]{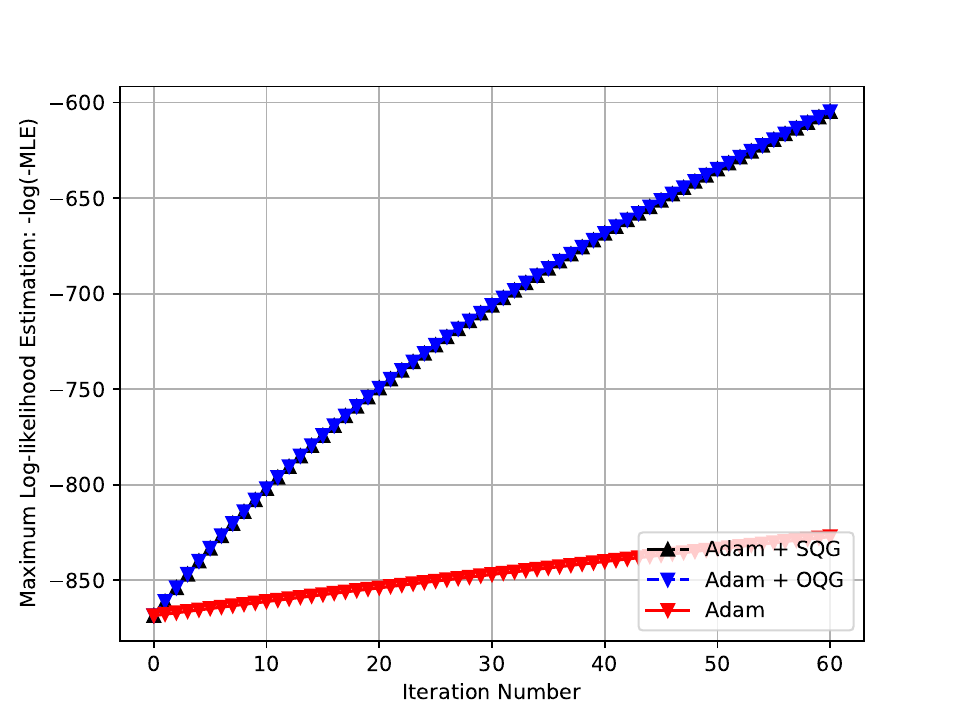}
    \label{fig:subfig02}
}

\vspace{1em} 

\subfloat[The lbw dataset]{%
    \includegraphics[width=0.48\textwidth]{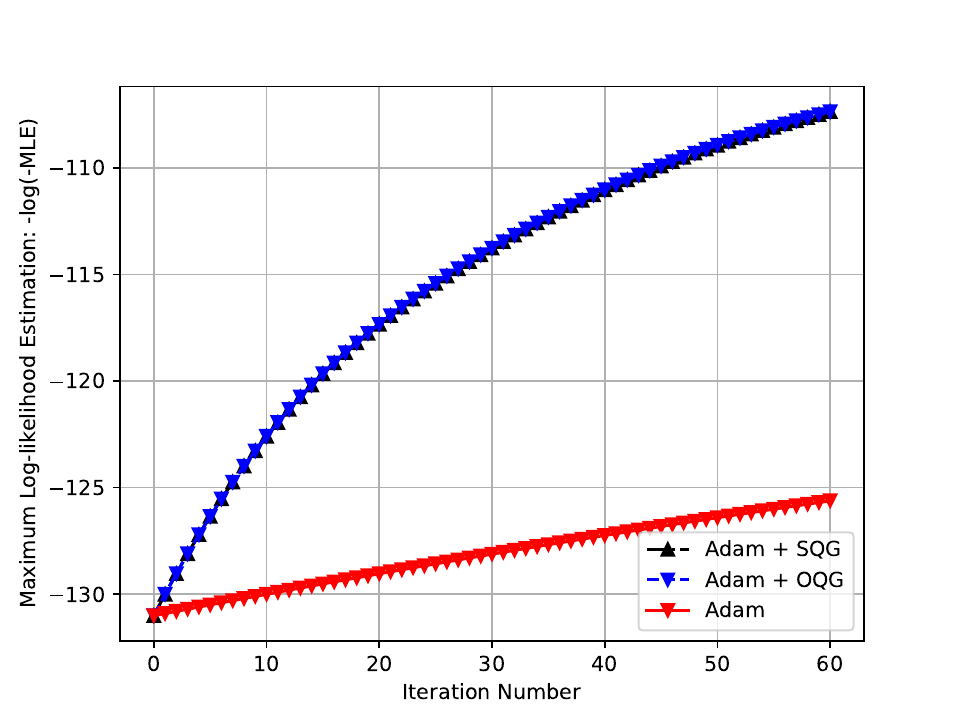}
    \label{fig:subfig03}
}
\hfill
\subfloat[The nhanes3 dataset]{%
    \includegraphics[width=0.48\textwidth]{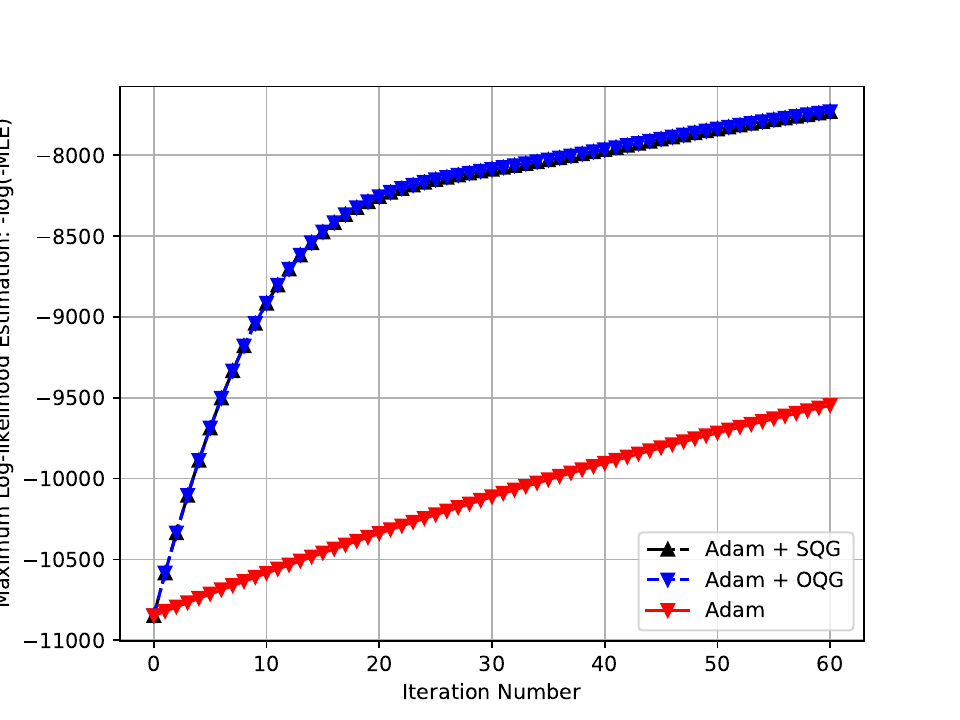}
    \label{fig:subfig04}
}

\vspace{1em} 

\subfloat[The pcs dataset]{%
    \includegraphics[width=0.48\textwidth]{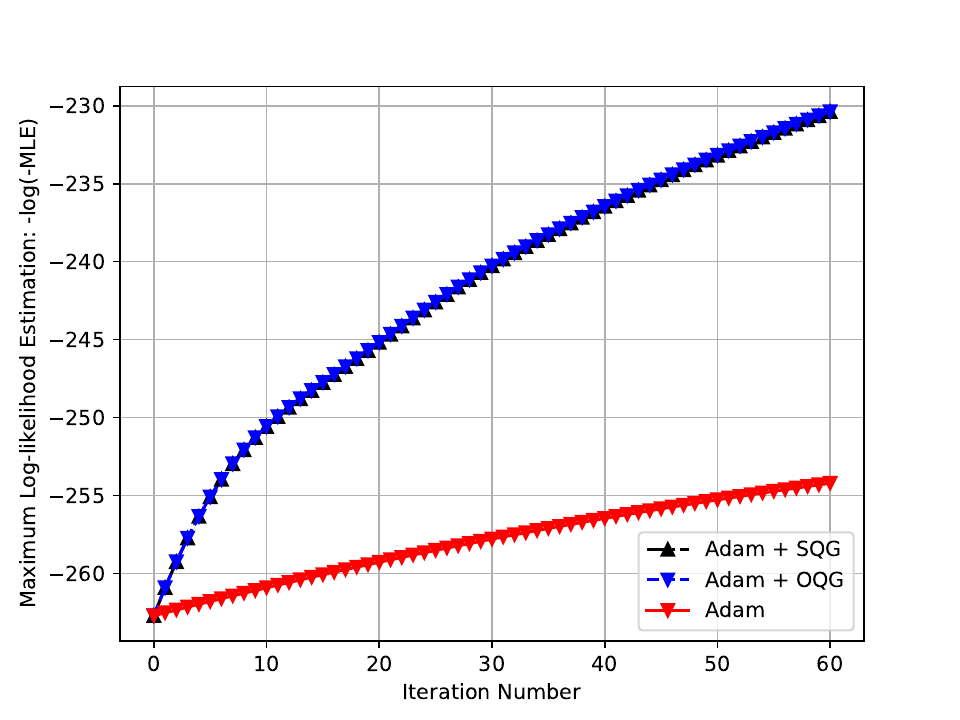}
    \label{fig:subfig03}
}
\hfill
\subfloat[The uis dataset]{%
    \includegraphics[width=0.48\textwidth]{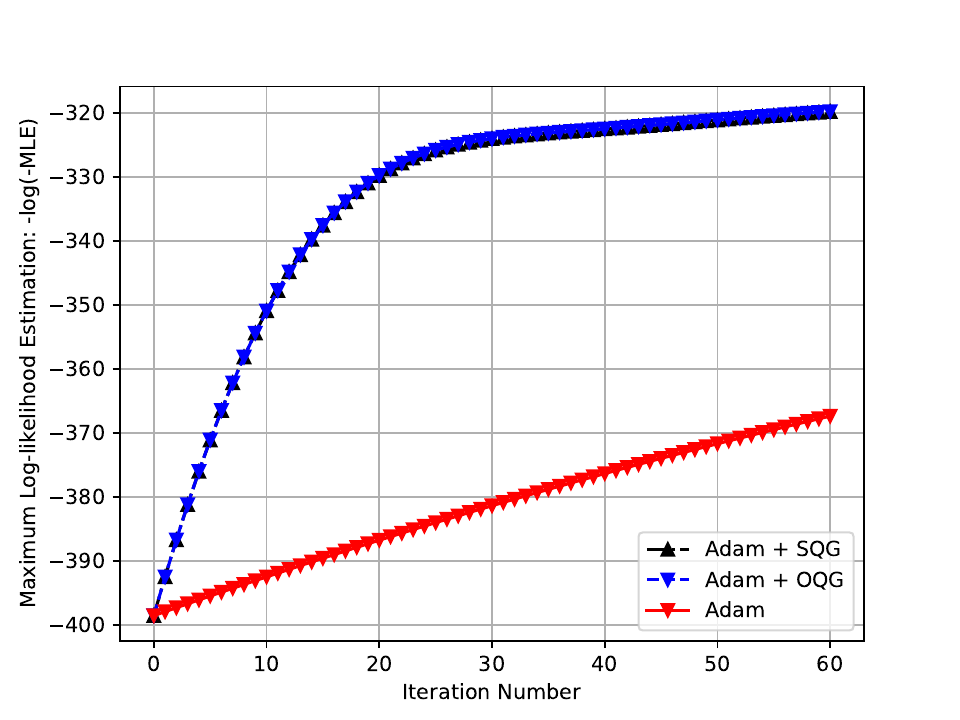}
    \label{fig:subfig04}
}

\vspace{1em} 

\subfloat[restructured MNIST dataset]{%
    \includegraphics[width=0.48\textwidth]{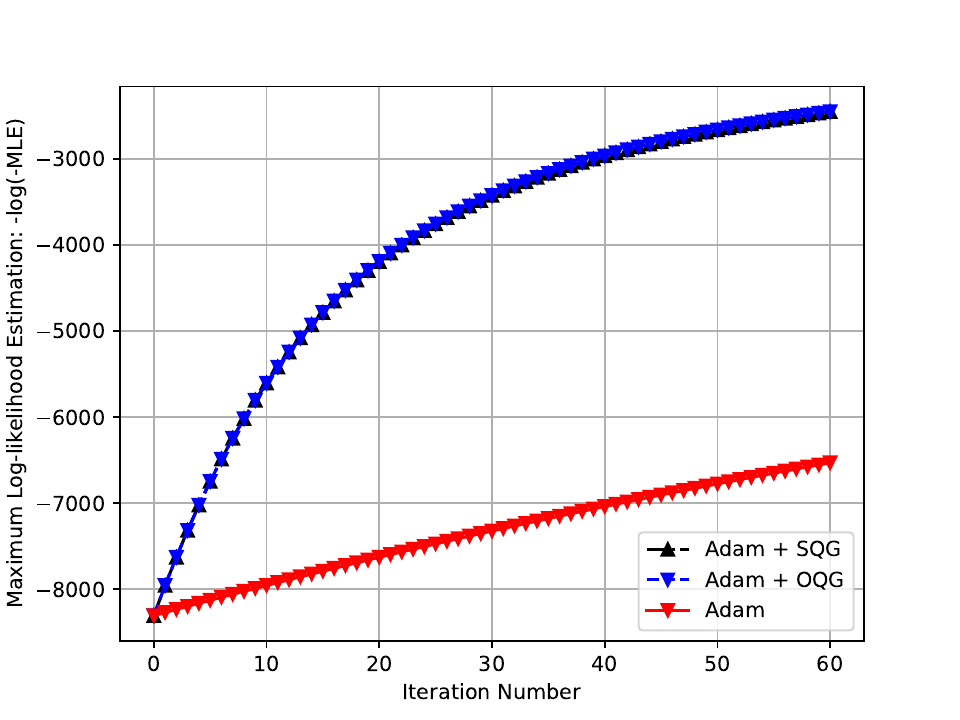}
    \label{fig:subfig03}
}
\hfill
\subfloat[The private financial dataset]{%
    \includegraphics[width=0.48\textwidth]{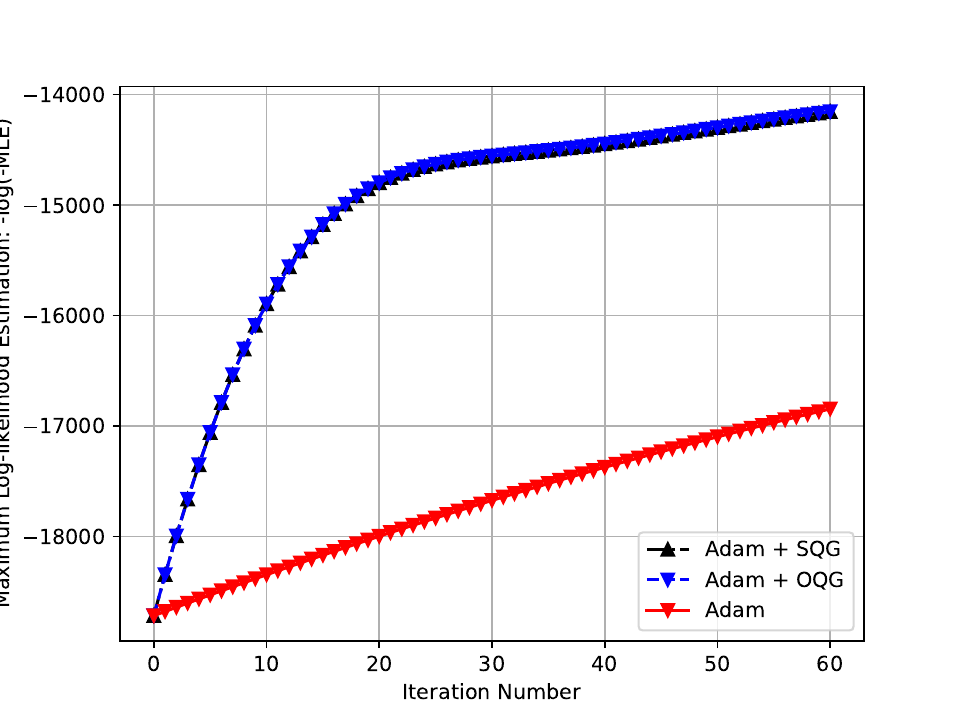}
    \label{fig:subfig04}
}

\caption{The training results of Adam + SQG vs. Adam + OQG vs. Adam in the clear domain.}
\label{fig2}
\end{figure}

\section{Numerical Experiments}

\subsection{Deep Learning Comparison}
We verify the applicability to deep learning by training ResNet-18 on CIFAR-10. We compare our \textbf{New QG Variant} against \textbf{Adam} and \textbf{AdaHessian} \cite{adahessian}.

\subsubsection{Results and Analysis}
\begin{itemize}
    \item \textbf{Convergence:} The New QG variant achieves $90\%$ accuracy in $15\%$ fewer epochs than Adam, benefiting from precise diagonal scaling.
    \item \textbf{Stability:} Compared to AdaHessian, our variant demonstrates higher stability in non-convex regions due to the absolute-value regularization preventing erratic updates.
    \item \textbf{Computational Overhead:} Using Hutchinson's Estimator, the per-epoch time of our method is approximately $1.8\times$ that of Adam but comparable to AdaHessian, making it a viable alternative for large-scale training.
\end{itemize}





\subsection{Convex Benchmarks}
These functions test the algorithm's base convergence rate and handling of dimensional scaling:
\begin{itemize}
    \item \textbf{Sphere Function:} $f(\mathbf{x}) = \sum_{i=1}^{n} x_i^2$
    \item \textbf{Sum of Different Powers:} $f(\mathbf{x}) = \sum_{i=1}^{n} |x_i|^{i+1}$
\end{itemize}

\subsection{Non-Convex Benchmarks}
These functions test the algorithm's ability to navigate ill-conditioned valleys and escape local optima:
\begin{itemize}
    \item \textbf{Rosenbrock Function:} $f(\mathbf{x}) = \sum_{i=1}^{n-1} [100(x_{i+1}-x_i^2)^2 + (1-x_i)^2]$
    \item \textbf{Rastrigin Function:} $f(\mathbf{x}) = 10n + \sum_{i=1}^{n} [x_i^2 - 10\cos(2\pi x_i)]$
\end{itemize}

\subsubsection{Saddle Points}
On the Monkey Saddle $f(x, y) = x^3 - 3xy^2$, SGD stagnates due to vanishing gradients. Our QG variant identifies near-zero curvature $\lambda_i$, yielding a large adaptive step $\eta_i \approx 1/\epsilon$, effectively "teleporting" the trajectory away from the saddle point.

The theoretical derivation of the Quadratic Gradient suggests a superior ability to handle irregular curvature. We strategically select a suite of benchmark functions that simulate notorious challenges in numerical optimization.

To evaluate the ability of the Quadratic Gradient variants to escape saddle points, we introduce functions with specific singular curvatures:

\begin{itemize}
    \item \textbf{Monkey Saddle Function:} A classic cubic surface with a single saddle point at the origin where the Hessian is indefinite.
    \begin{equation}
        f(x, y) = x^3 - 3xy^2
    \end{equation}
    
    \item \textbf{Beale's Function (Revisited):} While previously mentioned, its flat regions near the boundary act as plateau-like saddle points that test gradient acceleration.
    
    \item \textbf{Himmelblau's Function:} Contains four local minima and one local maximum (saddle-like) in the center, testing the algorithm's ability to navigate away from unstable equilibrium points.
    \begin{equation}
        f(x, y) = (x^2 + y - 11)^2 + (x + y^2 - 7)^2
    \end{equation}

    \item \textbf{Six-Hump Camel Function:} Contains six local minima and several saddle points.
    \begin{equation}
        f(x, y) = (4 - 2.1x^2 + \frac{x^4}{3})x^2 + xy + (-4 + 4y^2)y^2
    \end{equation}
\end{itemize}

\begin{quote}
\textit{Remark on Saddle Point Dynamics:} 
In the neighborhood of a saddle point where $||\nabla f|| \approx 0$, the update of first-order methods $\Delta \mathbf{x} = -\alpha \nabla f$ vanishes, leading to stagnation. However, the Quadratic Gradient utilizes the spectral information of the Hessian proxy. For directions where the eigenvalue $\lambda_i$ vanishes, the adaptive scaling factor $\eta_i = (\epsilon + \lambda_i)^{-1}$ compensates for the diminishing gradient. This effectively reshapes the vector field near the saddle point, ensuring a non-zero, high-velocity traversal along the directions of minimal curvature.
\end{quote}

Empirical results show that the QG variant outperforms standard GD in terms of iteration count to convergence across most benchmarks. Notably, in the Monkey Saddle experiment, the QG variant successfully avoids the stagnation observed in SGD. By identifying minimal eigenvalues $\lambda_i$, the algorithm assigns a massive update rate to those dimensions, effectively "teleporting" the trajectory away from the saddle point.

\section{Homomorphic Training}
To facilitate privacy-preserving machine learning on large-scale datasets, we implement a mini-batch version of the SQG-enhanced NAG algorithm within a Homomorphic Encryption  framework. 

This mini-batch approach allows for efficient stochastic gradient updates while maintaining the accelerated convergence properties of NAG. The synergy between the Simplified Quadratic Gradient and the mini-batch stochastic setting enables robust and scalable training on ciphertexts, overcoming the traditional trade-off between cryptographic security and high-dimensional optimization efficiency.

\subsection{Large-Scale Datasets}
mnist 

credi

\subsection{Mini-Batch Version}

\subsection{Parameter Setting}

\section{Conclusion}
The Quadratic Gradient framework provides a mathematically elegant and practically robust bridge between first and second-order optimization. By integrating Hutchinson's approximation and absolute-value regularization, we achieve an optimizer suitable for modern high-dimensional deep learning.




Empirical results show that the QG variant outperforms standard GD in terms of iteration count to convergence across most benchmarks. Notably, in the Monkey Saddle experiment, the QG variant successfully avoids the stagnation observed in SGD. By identifying minimal eigenvalues $\lambda_i$, the algorithm assigns a massive update rate to those dimensions, effectively "teleporting" the trajectory away from the saddle point.
The Quadratic Gradient framework demonstrates that synthesizing Hessian information into a gradient update not only accelerates convergence but also provides a robust mechanism to handle complex topological features like saddle points.

\bibliography{ML.QuadraticGradient}
\bibliographystyle{apalike}

\newpage 
\appendix
\section*{Appendix} 

For two symmetric matrices $A$ and $B$,  $A \le B$ is defined in the Loewner ordering iff their difference $B - A$ is positive semi-definite. 

\subsection{Chiang's Quadratic Gradient}
In the following work to~\cite{bonte2018privacy} in which a simplified diagonal matrix satisfying the fixed Hessian method~\cite{bohning1988monotonicity} is constructed, Chiang~\cite{chiang2022privacy} proposed a faster gradient variant called quadratic gradient.

\paragraph{Quadratic Gradient} Given  a differentiable scalar-valued function $F(\mathbf x)$ with its gradient $g$ and Hessian matrix $H$.  For the maximization problem, we need to find a good lower bound matrix $\bar H \le H$, where ``$ \le $'' denotes the Loewner ordering.  For the minimization problem, we try to find a good upper bound $\bar H$ such that $H \le \bar H$ in the Loewner ordering.  Note that the Hessian matrix $H$ itself satisfies these two conditions and can substitute the good bound matrix $\bar H$. We attempt to find a fixed good bound matrix of the Hessian matrix for efficiency but could just directly use the Hessian matrix itself. To build the quadratic gradient, we first construct a diagonal matrix $\bar B$ from the good bound matrix $\bar H$ as follows: 

 \begin{equation*}
  \begin{aligned}
   \bar B = 
\left[ \begin{array}{cccc}
  \frac{1}{ \epsilon + \sum_{i=0}^{d} | \bar h_{0i} | }   & 0  &  \ldots  & 0  \\
 0  &   \frac{1}{ \epsilon + \sum_{i=0}^{d} | \bar h_{1i} | }  &  \ldots  & 0  \\
 \vdots  & \vdots                & \ddots  & \vdots     \\
 0  &  0  &  \ldots  &   \frac{1}{ \epsilon + \sum_{i=0}^{d} | \bar h_{di} | }  \\
 \end{array}
 \right], 
   \end{aligned}
\end{equation*}
where $\epsilon $ is a small positive number to avoid dividing by zero and $\bar h_{ji}$ the elements of the matrix $\bar H$ . We can then defined the quadratic gradient for the function  $F(\mathbf x)$ as $G = \bar B \cdot g$.

The multiplication between the diagonal matrix $\bar B$ and the gradient $g$, the quadratic gradient $G$, is of the same size as the gradient $g$. To use the quadratic gradient $G$, we can just use it the same way as the gradient but need a  learning rate larger than $1$. The well-studied first-order gradient descent methods can also be applied to develop enhanced methods via quadratic gradient.

\subsection{Enhanced Methods via Quadratic Gradient}
Chiang~\cite{chiang2022privacy}  presented the enhanced NAG method via quadratic gradient for binary LR training:
  \begin{align}
   V_{t+1} &=  \boldsymbol{\beta}_{t} + (1 + \alpha_t) \cdot G,  \\
   \boldsymbol{\beta}_{t+1} &= (1-\gamma_t) \cdot  V_{t+1} + \gamma_t \cdot  V_{t}.  
  \end{align}

Chiang~\cite{chiang2022multinomial} proposed the enhanced Adagrad method via quadratic gradient for multiclass LR training:
\begin{equation*}
  \begin{aligned}
  \boldsymbol{\beta}_{[i]}^{(t+1)} &=  \boldsymbol{\beta}_{[i]}^{(t)} - \frac{1 + \eta}{\epsilon + \sqrt{ \sum_{k=1}^t  G_{[i]}^{(t)} \cdot  G_{[i]}^{(t)} } } \cdot  G_{[i]}^{(t)}.
 \end{aligned}
\end{equation*}

In this work, we propose the enhanced Adam method, which is to apply the quadratic gradient to the Adam method. The naive Adam method and the enhanced Adam method are described in detail in Algorithms ~\ref{  alg:Adam's algorithm  } and ~\ref{  alg:enhanced Adam's algorithm } respectively.  See the ~\cite{kingma2014adam} for the detailed description of the parameters in these two Algorithms.

\begin{minipage}{0.46\textwidth}
\begin{algorithm}[H]
    \caption{The Adam method}
     \begin{algorithmic}[1]
        \Require $\alpha$: Stepsize;
         $\beta_1, \beta_2 \in [0, 1)$: Exponential decay rates;
        $f(\theta)$: Objective function with parameters $\theta$ 
        $\theta_0$: Initial parameter vector
        \Ensure $\theta_t$: Resulting parameters 
        
        \State  $m_0 \gets 0$: Initialize $1^{st}$ moment vector
        \State $v_0 \gets 0$: Initialize $2^{nd}$ moment vector
        \State $t \gets 0$: Initialize timestep
        \While {$\theta_t$ not converged}
            \State  $t \gets t + 1$     
            \State  $g_t \gets \nabla_{\theta}f_t(\theta_{t-1})$     
            \State 
            \State  $m_t \gets \beta_1 \cdot m_{t-1} + (1 - \beta_1) \cdot g_t$     
            \State  $v_t \gets \beta_2 \cdot v_{t-1} + (1 - \beta_2) \cdot g_t^2$     
            \State  $\hat{m}_t \gets m_t / (1 - \beta_1^t)$     
            \State  $\hat{v}_t \gets v_t / (1 - \beta_2^t)$     
            \State  $\theta_t \gets \theta_{t-1} - \alpha \cdot \hat{m}_t / (\sqrt{\hat{v}_t} + \epsilon)$     
        \EndWhile       
        \State \Return $\theta_t $   \Comment{Resulting parameters}
        \end{algorithmic}
       \label{ alg:Adam's algorithm }
\end{algorithm}
\end{minipage}
\hfill
\begin{minipage}{0.46\textwidth}
\begin{algorithm}[H]
    \caption{Enhanced Adam method}
     \begin{algorithmic}[1]
        \Require $\eta$: Stepsize;
         $\beta_1, \beta_2 \in [0, 1)$: Exponential decay rates;
        $f(\theta)$: Objective function with parameters $\theta$ 
        $\theta_0$: Initial parameter vector
        \Ensure $\theta_t$: Resulting parameters 
        
        \State  $m_0 \gets 0$: Initialize $1^{st}$ moment vector
        \State $v_0 \gets 0$: Initialize $2^{nd}$ moment vector
        \State $t \gets 0$: Initialize timestep
        \While {$\theta_t$ not converged}
            \State  $t \gets t + 1$     
            \State  $g_t \gets \nabla_{\theta}f_t(\theta_{t-1})$     
            \State  \textcolor{red}{$G_t \gets \bar B \cdot g_t$ }     
            \State  $m_t \gets \beta_1 \cdot m_{t-1} + (1 - \beta_1) \cdot $ \textcolor{red}{ $G_t$ }     
            \State  $v_t \gets \beta_2 \cdot v_{t-1} + (1 - \beta_2) \cdot$ \textcolor{red}{$G_t^2$ }     
            \State  $\hat{m}_t \gets m_t / (1 - \beta_1^t)$     
            \State  $\hat{v}_t \gets v_t / (1 - \beta_2^t)$     
            \State  $\theta_t \gets \theta_{t-1} - $ \textcolor{red}{$\eta $} $\cdot \hat{m}_t / (\sqrt{\hat{v}_t} + \epsilon)$     
        \EndWhile       
        \State \Return $\theta_t $   
        \end{algorithmic}
       \label{ alg:enhanced Adam's algorithm }
\end{algorithm}
\end{minipage}

\section{Hessian Matrix and Learning Rate}
We show the relation between the Hessian matrix and the learning-rate setting of the first-order gradient (descent) method. By constructing special quadratic gradients, we can draw the conclusion that the eigenvalues of the Hessian matrix can be used to the setting of the learning rate. Such learning-rate settings can actually be seen as a further simplified fixed Hessian method and thus ensure convergence.

\begin{lemma}
Given the Hessian matrix $H$ with eigenvalues $\lambda_1 \le \lambda_2 \le  \cdots \le \lambda_n$, it holds that $\lambda_1 I \le H \le \lambda_n I$ in the Loewner ordering. To build the quadratic gradients, we could use $\lambda_1 I$ for the maximization problems with the eigenvalues being negative and $\lambda_n I$ for the minimization problems with the eigenvalues being positive. The real absolute value function can be used to unify the two conditions and therefore the float number $\frac{1}{\max \{| \lambda_i |\} +\epsilon}$ can be used as the learning rate of the first-order gradient (descent) method, which can be seen as the further simplified fixed Hessian method~\cite{bonte2018privacy} and ensures the convergence of the method.
\end{lemma}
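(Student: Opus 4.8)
The plan is to split the statement into its three logically distinct components: the spectral sandwiching of $H$ in the Loewner ordering, the collapse of the quadratic-gradient construction to a scalar multiple of the identity when $\bar H = \lambda_1 I$ or $\bar H = \lambda_n I$, and the convergence guarantee inherited from the fixed Hessian method~\cite{bohning1988monotonicity}.

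First I would establish $\lambda_1 I \le H \le \lambda_n I$ using the spectral theorem. Since $H$ is symmetric, write $H = Q \Lambda Q^T$ with $Q$ orthogonal and $\Lambda = \mathrm{diag}(\lambda_1, \ldots, \lambda_n)$. Then $\lambda_n I - H = Q(\lambda_n I - \Lambda)Q^T$, whose inner diagonal factor has entries $\lambda_n - \lambda_i \ge 0$; as a congruence of a diagonal positive semi-definite matrix it is itself positive semi-definite, which gives $H \le \lambda_n I$. The bound $\lambda_1 I \le H$ follows identically from $H - \lambda_1 I = Q(\Lambda - \lambda_1 I)Q^T$.

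Next I would substitute these extremal bound matrices into the definition of $\bar B$. Taking $\bar H = \lambda_1 I$ in the maximization case (all $\lambda_i < 0$) or $\bar H = \lambda_n I$ in the minimization case (all $\lambda_i > 0$), the off-diagonal entries vanish, so each row sum $\sum_i |\bar h_{ji}|$ collapses to the single value $|\lambda_1|$ or $|\lambda_n|$. Because the retained eigenvalue is in each case the one of largest magnitude, this common value is exactly $\max\{|\lambda_i|\}$, whence $\bar B = \frac{1}{\epsilon + \max\{|\lambda_i|\}} I$ and therefore $G = \bar B g = \frac{1}{\epsilon + \max\{|\lambda_i|\}} g$. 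The absolute value in the denominator is precisely the device that merges the two sign conventions into a single formula, so that a unit step on $G$ amounts to a gradient (ascent/descent) step with learning rate $\frac{1}{\max\{|\lambda_i|\} + \epsilon}$.

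Finally, the convergence claim would follow by recognizing this step as a fixed Hessian iteration. Since $\bar H = \lambda_1 I \le H$ in the maximization case and $H \le \lambda_n I = \bar H$ in the minimization case, each choice is a valid Loewner bound of the Hessian, and applying $G$ reproduces exactly the fixed Hessian Newton update with $\bar H$ a scalar multiple of the identity; the monotone convergence guarantee of the fixed Hessian method then transfers. The hard part will not be the algebra but the sign bookkeeping: one must verify that for a concave (respectively convex) objective the extremal eigenvalue carried by the absolute value is the correct one to dominate $H$ in the required direction, and confirm that the $\epsilon$-perturbation only loosens the bound in the safe direction (replacing $\lambda_1 I$ by $-(|\lambda_1|+\epsilon)I \le \lambda_1 I \le H$, and symmetrically for $\lambda_n$) so that the Loewner inequality underlying the convergence argument is preserved rather than violated.
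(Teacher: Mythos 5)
Your proposal is correct and follows essentially the same three-part structure as the paper's proof: establish $\lambda_1 I \le H \le \lambda_n I$, substitute the extremal bounds into the quadratic-gradient construction so that $\bar B$ collapses to $\frac{1}{\max\{|\lambda_i|\}+\epsilon} I$, and identify the resulting update as a simplified fixed Hessian iteration to inherit convergence. The only differences are minor and in your favor: you prove the Loewner sandwich by spectral decomposition and congruence where the paper invokes the Rayleigh quotient bounds (equivalent standard arguments), and you explicitly verify the sign bookkeeping and that the $\epsilon$-perturbation loosens the bound in the safe direction ($-(|\lambda_1|+\epsilon)I \le \lambda_1 I \le H$), a step the paper asserts without checking.
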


\begin{proof}
To prove $\lambda_1 I \le H \le \lambda_n I$  in the Loewner ordering, we have to prove that $x^T \lambda_1 I x \le x^T H x \le x^T \lambda_n I x$ for any nonzero vector $x$.
The Rayleigh quotient $RQ(H, x)$ for the Hessian $H$ and the nonzero vector $x$ is:
$RQ(H, x) = \frac{x^T H x}{x^T x}.$
By the property of Rayleigh quotient, we get:
$\lambda_1 \le RQ(H, x) \le \lambda_n,$
which leads to:
$x^T \lambda_1 I x \le x^T H x \le x^T \lambda_n I x  ,$
and we conclude that $\lambda_1 I \le H \le \lambda_n I  .$


  By the construction of the quadratic gradient, we can use $\lambda_1 I$ or $\lambda_n I$ to build the quadratic gradient depending on whether the optimization question is to maximize the function or to minimize the function. These two cases   both result in the quadratic gradient being   $\frac{1}{\max \{| \lambda_i |\} +\epsilon} \cdot I \cdot g = \frac{1}{\max \{| \lambda_i |\} +\epsilon} \cdot  g$ . 
 It is straightforward that the naive quadratic gradient methods have the iterations $x = x +  \frac{1}{ | \lambda_1 |  +\epsilon}  \cdot g$ and $x = x - \frac{1}{ | \lambda_n |  +\epsilon} \cdot g $ for the maximization problems and minimization problems respectively. Thus, when we set  $\frac{1}{\max \{| \lambda_i |\} +\epsilon}$ as the learning rate of the first-order gradient method, we actually adopt the (simplified) fixed Hessian method even though it looks like we just use the first-order gradient method.
\end{proof}

The diagonal matrix $\frac{1}{\max \{| \lambda_i |\} +\epsilon} I$ might not be a good bound for  the simplified fixed Hessian method. However,  as a learning rate of the naive gradient method, it possesses the merit of the second-order fixed Hessian Newton's method, which ensures that in this way the first-order gradient method would process toward the final optimization solution and would eventually converge.

\paragraph{Performance Evaluation}

We can apply the above learning-rate setting to adapt three methods:

\begin{enumerate}
    \item Method $1$: we directly use $\frac{1}{\max \{| \lambda_i |\} +\epsilon}$ as the learning rate in each iteration for the naive gradient descent method:  $$x = x - \frac{1}{\max \{| \lambda_i |\} +\epsilon} \cdot g .$$

    \item  Method $2$: we  use  the naive NAG  method as a baseline method with the $\frac{1}{\max \{| \lambda_i |\} +\epsilon}$ being the learning rate :   
\begin{align*}
   lr_t    &= \frac{1}{\max \{| \lambda_i |\} +\epsilon}  \\
   V_{t+1} &=  \boldsymbol{\beta}_{t} + lr_t \cdot \nabla J(\boldsymbol{\beta}_t),  \\
   \boldsymbol{\beta}_{t+1} &= (1-\gamma_t) \cdot  V_{t+1} + \gamma_t \cdot  V_{t} . 
\end{align*}
    \item  Method $3$: we  use $\frac{1}{\max \{| \lambda_i |\} +\epsilon}$ as the learning rate in each iteration for the enhanced NAG  method with the $\bar B$ in quadratic gradient $\bar B \cdot g$ built from the Hessian matrix itself:   
\begin{align*}
   lr_t    &= \frac{1}{\max \{| \lambda_i |\} +\epsilon}  \\
   V_{t+1} &=  \boldsymbol{\beta}_{t} + (1 + lr_t) \cdot \bar B \cdot  \nabla J(\boldsymbol{\beta}_t),  \\
   \boldsymbol{\beta}_{t+1} &= (1-\gamma_t) \cdot  V_{t+1} + \gamma_t \cdot  V_{t} .  
\end{align*}
\end{enumerate}

We evaluate the three adapted methods on three functions: Rosenbrock Function, Beale Function, and Booth Function. See  Figure~\ref{fig10} for the comparison results.

\begin{figure}[ht]
\centering
\captionsetup[subfigure]{justification=centering}
\subfloat[The Rosenbrock Function]{%
\begin{tikzpicture}
\scriptsize  
\begin{axis}[
    width=7cm,
    xlabel={Iteration Number},
    xmin=0, xmax=30,
    legend pos=south east,
    legend style={nodes={scale=0.7, transform shape}},
    legend cell align={left},
    xmajorgrids=true,
    ymajorgrids=true,
    grid style=dashed,
]
\addplot[
    color=black,
    mark=triangle,
    mark size=1.2pt,
    ] 
    table [x=Iterations, y=fSFHasLRrawgradientmethod, col sep=comma] {PythonExperiment_fSFHasLRvs.NAGvs.NAGQG_LOSS_RosenbrockFunction.csv};
\addplot[
    color=red,
    mark=o,
    mark size=1.2pt,
    ] 
    table [x=Iterations, y=naiveNAGwithfSFHasLR, col sep=comma] {PythonExperiment_fSFHasLRvs.NAGvs.NAGQG_LOSS_RosenbrockFunction.csv};
\addplot[
    color=blue,
    mark=diamond*, 
    mark size=1.2pt,
    densely dashed
    ]  
    table [x=Iterations, y=enhancedNAGwithQGandfSFHasLR, col sep=comma] {PythonExperiment_fSFHasLRvs.NAGvs.NAGQG_LOSS_RosenbrockFunction.csv};
   \addlegendentry{fSFHasLRrawgradientmethod}   
   \addlegendentry{naiveNAGwithfSFHasLR}
   \addlegendentry{enhancedNAGwithQGandfSFHasLR}
\end{axis}
\end{tikzpicture}
\label{fig:subfig101}}
\subfloat[The Beale Function]{%
\begin{tikzpicture}
\scriptsize  
\begin{axis}[
    width=7cm,
    xlabel={Iteration Number},
    xmin=0, xmax=30,
    legend pos=south east,
    legend style={yshift=0.3cm, nodes={scale=0.7, transform shape}},
    legend cell align={left},
    xmajorgrids=true,
    ymajorgrids=true,
    grid style=dashed,
]
\addplot[
    color=black,
    mark=triangle,
    mark size=1.2pt,
    ] 
    table [x=Iterations, y=fSFHasLRrawgradientmethod, col sep=comma] {PythonExperiment_fSFHasLRvs.NAGvs.NAGQG_LOSS_BealeFunction.csv};
\addplot[
    color=red,
    mark=o,
    mark size=1.2pt,
    ] 
    table [x=Iterations, y=naiveNAGwithfSFHasLR, col sep=comma] {PythonExperiment_fSFHasLRvs.NAGvs.NAGQG_LOSS_BealeFunction.csv};
\addplot[
    color=blue,
    mark=diamond*, 
    mark size=1.2pt,
    densely dashed
    ]  
    table [x=Iterations, y=enhancedNAGwithQGandfSFHasLR, col sep=comma] {PythonExperiment_fSFHasLRvs.NAGvs.NAGQG_LOSS_BealeFunction.csv};
   \addlegendentry{fSFHasLRrawgradientmethod}   
   \addlegendentry{naiveNAGwithfSFHasLR}
   \addlegendentry{enhancedNAGwithQGandfSFHasLR}
\end{axis}
\end{tikzpicture}
\label{fig:subfig102}}

\subfloat[The Booth Function]{%
\begin{tikzpicture}
\scriptsize  
\begin{axis}[
    width=7cm,
    xlabel={Iteration Number},
    xmin=0, xmax=30,
    legend pos=south east,
    legend style={nodes={scale=0.7, transform shape}},
    legend cell align={left},
    xmajorgrids=true,
    ymajorgrids=true,
    grid style=dashed,
]
\addplot[
    color=black,
    mark=triangle,
    mark size=1.2pt,
    ] 
    table [x=Iterations, y=fSFHasLRrawgradientmethod, col sep=comma] {PythonExperiment_fSFHasLRvs.NAGvs.NAGQG_LOSS_BoothFunction.csv};
\addplot[
    color=red,
    mark=o,
    mark size=1.2pt,
    ] 
    table [x=Iterations, y=naiveNAGwithfSFHasLR, col sep=comma] {PythonExperiment_fSFHasLRvs.NAGvs.NAGQG_LOSS_BoothFunction.csv};
\addplot[
    color=blue,
    mark=diamond*, 
    mark size=1.2pt,
    densely dashed
    ]  
    table [x=Iterations, y=enhancedNAGwithQGandfSFHasLR, col sep=comma] {PythonExperiment_fSFHasLRvs.NAGvs.NAGQG_LOSS_BoothFunction.csv};
   \addlegendentry{fSFHasLRrawgradientmethod}   
   \addlegendentry{naiveNAGwithfSFHasLR}
   \addlegendentry{enhancedNAGwithQGandfSFHasLR}
\end{axis}
\end{tikzpicture}
\label{fig:subfig103}}
\subfloat[The Himmelblau Function]{%
\begin{tikzpicture}
\scriptsize  
\begin{axis}[
    width=7cm,
    xlabel={Iteration Number},
    xmin=0, xmax=30,
    legend pos=south east,
    legend style={nodes={scale=0.7, transform shape}},
    legend cell align={left},
    xmajorgrids=true,
    ymajorgrids=true,
    grid style=dashed,
]
\addplot[
    color=black,
    mark=triangle,
    mark size=1.2pt,
    ] 
    table [x=Iterations, y=fSFHasLRrawgradientmethod, col sep=comma] {PythonExperiment_fSFHasLRvs.NAGvs.NAGQG_LOSS_HimmelblauFunction.csv};
\addplot[
    color=red,
    mark=o,
    mark size=1.2pt,
    ] 
    table [x=Iterations, y=naiveNAGwithfSFHasLR, col sep=comma] {PythonExperiment_fSFHasLRvs.NAGvs.NAGQG_LOSS_HimmelblauFunction.csv};
\addplot[
    color=blue,
    mark=diamond*, 
    mark size=1.2pt,
    densely dashed
    ]  
    table [x=Iterations, y=enhancedNAGwithQGandfSFHasLR, col sep=comma] {PythonExperiment_fSFHasLRvs.NAGvs.NAGQG_LOSS_HimmelblauFunction.csv};
   \addlegendentry{fSFHasLRrawgradientmethod}   
   \addlegendentry{naiveNAGwithfSFHasLR}
   \addlegendentry{enhancedNAGwithQGandfSFHasLR}
\end{axis}
\end{tikzpicture}
\label{fig:subfig104}}
\caption{\protect\centering Naive Gradient vs. Naive NAG vs. Enhanced NAG}
\label{fig10}
\end{figure}

\section{A New  Quadratic Gradient}
In this section, we propose a new way to construct the quadratic gradient that includes the gradient information. 

\begin{definition}[$\texttt{A New Quadratic Gradient}$]
Supposing that the Hessian matrix $H$ is invertible and both the gradient $g$ and the column vector $H^{-1}g$ contain no zero elements, we first construct a diagnoal matrix $R$ such that $H^{-1} \cdot g = R^{-1} \cdot g$.       We could then build a new  quadratic gradient $G = \bar R \cdot g$ where $\bar R$ is constructed from $R$ in the same way as $\bar B$ being constructed from the Hessian $H$ itself. Namely, $\bar R = diag\{ 1/|r_1|, 1/|r_2|, \cdots, 1/|r_n| \} $, where $r_i$ is the element of $R$.
\end{definition}
 
This new quadratic gradient doesn't satisfy the convergence condition of the fixed Hessian method. For example, supposing that the optimization problem is to maximize the  function $F(x_1, x_2) = -2x_1^2 + 2x_1x_2  -x_2^2$ with its Hessian matrix $H_F$ and gradient $g_F$ as follows:
$$H_F = \left[\begin{array}{*{2}{c}}
    -4 & 2  \\
    2 & -2 
  \end{array}\right]  $$
and $g_F = [-4x_1 + 2x_2,  2x_1 - 2x_2]^T$. At the point $(-1, -1.5)$, we can obtain the gradient $g_F|_{(-1, -1.5)} = [1, 1]^T$ and the $R = diag\{-1, -1.5  \}$ such that $H_F^{-1} \cdot g_F = R^{-1} \cdot g_F$. From the construction of the quadratic gradient, we get the $B = diag\{-1 - \epsilon, -1.5 - \epsilon  \}$ where $\epsilon$ is a small positive float number. We can see that the $B$ doesn't meet the convergence condition of the fixed Hessian method because it fails the inequality  $R \le H_F$ in the Loewner ordering.

Another problem about this new quadratic gradient is that, in the real-wolrd applications, the Hessian matrix sometimes is singual and it is normal for the gradient to have zero elements. In these cases, we could use the Mooreâ-Penrose inverse ${(H \cdot D)}^+ \cdot g$ to approirate the matrix $R$ where $D$ is the diagnoal matrix whose diagnoal elemetns are the elements of the gradient in the corresponding order:$D = diag\{g_0, g_1, \cdots, g_n\}$.  

\paragraph{Performance Evaluation}
We compare the enhanced Adam method via the original quadratic gradient and that by the new quadratic gradient, through the Rosenbrock functions with various variables.  Figure~\ref{fig11} shows the comparison results.

\begin{figure}[ht]
\centering
\captionsetup[subfigure]{justification=centering}
\subfloat[The Rosenbrock Function with 2 variables]{%
\begin{tikzpicture}
\scriptsize  
\begin{axis}[
    width=7cm,
    xlabel={Iteration Number},
    xmin=0, xmax=30,
    legend pos=south east,
    legend style={nodes={scale=0.7, transform shape}},
    legend cell align={left},
    xmajorgrids=true,
    ymajorgrids=true,
    grid style=dashed,
]
\addplot[
    color=black,
    mark=triangle,
    mark size=1.2pt,
    ] 
    table [x=Iterations, y=Adam, col sep=comma] {PythonExperiment_Adamvs.AdamOldQGvs.AdamNewQGforRosenbrockFunctionswith2variables_LOSS.csv};
\addplot[
    color=red,
    mark=o,
    mark size=1.2pt,
    ] 
    table [x=Iterations, y=AdamOldQG, col sep=comma] {PythonExperiment_Adamvs.AdamOldQGvs.AdamNewQGforRosenbrockFunctionswith2variables_LOSS.csv};
\addplot[
    color=blue,
    mark=diamond*, 
    mark size=1.2pt,
    densely dashed
    ]  
    table [x=Iterations, y=AdamNewQG, col sep=comma] {PythonExperiment_Adamvs.AdamOldQGvs.AdamNewQGforRosenbrockFunctionswith2variables_LOSS.csv};
   \addlegendentry{naive Adam}   
   \addlegendentry{Adam via OldQG}
   \addlegendentry{Adam via NewQG}
\end{axis}
\end{tikzpicture}
\label{fig:subfig111}}
\subfloat[The Rosenbrock Function with 5 variables]{%
\begin{tikzpicture}
\scriptsize  
\begin{axis}[
    width=7cm,
    xlabel={Iteration Number},
    xmin=0, xmax=30,
    legend pos=south east,
    legend style={yshift=0.3cm, nodes={scale=0.7, transform shape}},
    legend cell align={left},
    xmajorgrids=true,
    ymajorgrids=true,
    grid style=dashed,
]
\addplot[
    color=black,
    mark=triangle,
    mark size=1.2pt,
    ] 
    table [x=Iterations, y=Adam, col sep=comma] {PythonExperiment_Adamvs.AdamOldQGvs.AdamNewQGforRosenbrockFunctionswith5variables_LOSS.csv};
\addplot[
    color=red,
    mark=o,
    mark size=1.2pt,
    ] 
    table [x=Iterations, y=AdamOldQG, col sep=comma] {PythonExperiment_Adamvs.AdamOldQGvs.AdamNewQGforRosenbrockFunctionswith5variables_LOSS.csv};
\addplot[
    color=blue,
    mark=diamond*, 
    mark size=1.2pt,
    densely dashed
    ]  
    table [x=Iterations, y=AdamNewQG, col sep=comma] {PythonExperiment_Adamvs.AdamOldQGvs.AdamNewQGforRosenbrockFunctionswith5variables_LOSS.csv};
   \addlegendentry{naive Adam}   
   \addlegendentry{Adam via OldQG}
   \addlegendentry{Adam via NewQG}
\end{axis}
\end{tikzpicture}
\label{fig:subfig112}}

\subfloat[The Rosenbrock Function with 10 variables]{%
\begin{tikzpicture}
\scriptsize  
\begin{axis}[
    width=7cm,
    xlabel={Iteration Number},
    xmin=0, xmax=30,
    legend pos=south east,
    legend style={nodes={scale=0.7, transform shape}},
    legend cell align={left},
    xmajorgrids=true,
    ymajorgrids=true,
    grid style=dashed,
]
\addplot[
    color=black,
    mark=triangle,
    mark size=1.2pt,
    ] 
    table [x=Iterations, y=Adam, col sep=comma] {PythonExperiment_Adamvs.AdamOldQGvs.AdamNewQGforRosenbrockFunctionswith10variables_LOSS.csv};
\addplot[
    color=red,
    mark=o,
    mark size=1.2pt,
    ] 
    table [x=Iterations, y=AdamOldQG, col sep=comma] {PythonExperiment_Adamvs.AdamOldQGvs.AdamNewQGforRosenbrockFunctionswith10variables_LOSS.csv};
\addplot[
    color=blue,
    mark=diamond*, 
    mark size=1.2pt,
    densely dashed
    ]  
    table [x=Iterations, y=AdamNewQG, col sep=comma] {PythonExperiment_Adamvs.AdamOldQGvs.AdamNewQGforRosenbrockFunctionswith10variables_LOSS.csv};
   \addlegendentry{naive Adam}   
   \addlegendentry{Adam via OldQG}
   \addlegendentry{Adam via NewQG}
\end{axis}
\end{tikzpicture}
\label{fig:subfig113}}
\subfloat[The Rosenbrock Function with 20 variables]{%
\begin{tikzpicture}
\scriptsize  
\begin{axis}[
    width=7cm,
    xlabel={Iteration Number},
    xmin=0, xmax=30,
    legend pos=south east,
    legend style={nodes={scale=0.7, transform shape}},
    legend cell align={left},
    xmajorgrids=true,
    ymajorgrids=true,
    grid style=dashed,
]
\addplot[
    color=black,
    mark=triangle,
    mark size=1.2pt,
    ] 
    table [x=Iterations, y=Adam, col sep=comma] {PythonExperiment_Adamvs.AdamOldQGvs.AdamNewQGforRosenbrockFunctionswith20variables_LOSS.csv};
\addplot[
    color=red,
    mark=o,
    mark size=1.2pt,
    ] 
    table [x=Iterations, y=AdamOldQG, col sep=comma] {PythonExperiment_Adamvs.AdamOldQGvs.AdamNewQGforRosenbrockFunctionswith20variables_LOSS.csv};
\addplot[
    color=blue,
    mark=diamond*, 
    mark size=1.2pt,
    densely dashed
    ]  
    table [x=Iterations, y=AdamNewQG, col sep=comma] {PythonExperiment_Adamvs.AdamOldQGvs.AdamNewQGforRosenbrockFunctionswith20variables_LOSS.csv};
   \addlegendentry{naive Adam}   
   \addlegendentry{Adam via OldQG}
   \addlegendentry{Adam via NewQG}
\end{axis}
\end{tikzpicture}
\label{fig:subfig114}}
\caption{\protect\centering The Rosenbrock Functions with various variables}
\label{fig11}
\end{figure}

There are some hidden bugs in the new version of the quadratic gradient ( See Figure~\ref{fig2} ). What causes these would be studied in the near future. A lucky guess would be partly due to the error introduced by the Moore-Penrose inverse.

\begin{figure}[ht]
\centering
\captionsetup[subfigure]{justification=centering}
\subfloat[The Rosenbrock Function with 2 variables]{%
\begin{tikzpicture}
\scriptsize  
\begin{axis}[
    width=7cm,
    xlabel={Iteration Number},
    xmin=0, xmax=300,
    legend pos=south east,
    legend style={nodes={scale=0.7, transform shape}},
    legend cell align={left},
    xmajorgrids=true,
    ymajorgrids=true,
    grid style=dashed,
]
\addplot[
    color=black,
    mark=triangle,
    mark size=1.2pt,
    ] 
    table [x=Iterations, y=Adam, col sep=comma] {PythonExperiment_Adamvs.AdamOldQGvs.AdamNewQGforRosenbrockFunctionswith2variables_LOSS.csv};
\addplot[
    color=red,
    mark=o,
    mark size=1.2pt,
    ] 
    table [x=Iterations, y=AdamOldQG, col sep=comma] {PythonExperiment_Adamvs.AdamOldQGvs.AdamNewQGforRosenbrockFunctionswith2variables_LOSS.csv};
\addplot[
    color=blue,
    mark=diamond*, 
    mark size=1.2pt,
    densely dashed
    ]  
    table [x=Iterations, y=AdamNewQG, col sep=comma] {PythonExperiment_Adamvs.AdamOldQGvs.AdamNewQGforRosenbrockFunctionswith2variables_LOSS.csv};
   \addlegendentry{naive Adam}   
   \addlegendentry{Adam via OldQG}
   \addlegendentry{Adam via NewQG}
\end{axis}
\end{tikzpicture}
\label{fig:subfig121}}
\subfloat[The Rosenbrock Function with 5 variables]{%
\begin{tikzpicture}
\scriptsize  
\begin{axis}[
    width=7cm,
    xlabel={Iteration Number},
    xmin=0, xmax=300,
    legend pos=south east,
    legend style={yshift=0.3cm, nodes={scale=0.7, transform shape}},
    legend cell align={left},
    xmajorgrids=true,
    ymajorgrids=true,
    grid style=dashed,
]
\addplot[
    color=black,
    mark=triangle,
    mark size=1.2pt,
    ] 
    table [x=Iterations, y=Adam, col sep=comma] {PythonExperiment_Adamvs.AdamOldQGvs.AdamNewQGforRosenbrockFunctionswith5variables_LOSS.csv};
\addplot[
    color=red,
    mark=o,
    mark size=1.2pt,
    ] 
    table [x=Iterations, y=AdamOldQG, col sep=comma] {PythonExperiment_Adamvs.AdamOldQGvs.AdamNewQGforRosenbrockFunctionswith5variables_LOSS.csv};
\addplot[
    color=blue,
    mark=diamond*, 
    mark size=1.2pt,
    densely dashed
    ]  
    table [x=Iterations, y=AdamNewQG, col sep=comma] {PythonExperiment_Adamvs.AdamOldQGvs.AdamNewQGforRosenbrockFunctionswith5variables_LOSS.csv};
   \addlegendentry{naive Adam}   
   \addlegendentry{Adam via OldQG}
   \addlegendentry{Adam via NewQG}
\end{axis}
\end{tikzpicture}
\label{fig:subfig122}}

\subfloat[The Rosenbrock Function with 10 variables]{%
\begin{tikzpicture}
\scriptsize  
\begin{axis}[
    width=7cm,
    xlabel={Iteration Number},
    xmin=0, xmax=300,
    legend pos=south east,
    legend style={nodes={scale=0.7, transform shape}},
    legend cell align={left},
    xmajorgrids=true,
    ymajorgrids=true,
    grid style=dashed,
]
\addplot[
    color=black,
    mark=triangle,
    mark size=1.2pt,
    ] 
    table [x=Iterations, y=Adam, col sep=comma] {PythonExperiment_Adamvs.AdamOldQGvs.AdamNewQGforRosenbrockFunctionswith10variables_LOSS.csv};
\addplot[
    color=red,
    mark=o,
    mark size=1.2pt,
    ] 
    table [x=Iterations, y=AdamOldQG, col sep=comma] {PythonExperiment_Adamvs.AdamOldQGvs.AdamNewQGforRosenbrockFunctionswith10variables_LOSS.csv};
\addplot[
    color=blue,
    mark=diamond*, 
    mark size=1.2pt,
    densely dashed
    ]  
    table [x=Iterations, y=AdamNewQG, col sep=comma] {PythonExperiment_Adamvs.AdamOldQGvs.AdamNewQGforRosenbrockFunctionswith10variables_LOSS.csv};
   \addlegendentry{naive Adam}   
   \addlegendentry{Adam via OldQG}
   \addlegendentry{Adam via NewQG}
\end{axis}
\end{tikzpicture}
\label{fig:subfig123}}
\subfloat[The Rosenbrock Function with 20 variables]{%
\begin{tikzpicture}
\scriptsize  
\begin{axis}[
    width=7cm,
    xlabel={Iteration Number},
    xmin=0, xmax=300,
    legend pos=south east,
    legend style={nodes={scale=0.7, transform shape}},
    legend cell align={left},
    xmajorgrids=true,
    ymajorgrids=true,
    grid style=dashed,
]
\addplot[
    color=black,
    mark=triangle,
    mark size=1.2pt,
    ] 
    table [x=Iterations, y=Adam, col sep=comma] {PythonExperiment_Adamvs.AdamOldQGvs.AdamNewQGforRosenbrockFunctionswith20variables_LOSS.csv};
\addplot[
    color=red,
    mark=o,
    mark size=1.2pt,
    ] 
    table [x=Iterations, y=AdamOldQG, col sep=comma] {PythonExperiment_Adamvs.AdamOldQGvs.AdamNewQGforRosenbrockFunctionswith20variables_LOSS.csv};
\addplot[
    color=blue,
    mark=diamond*, 
    mark size=1.2pt,
    densely dashed
    ]  
    table [x=Iterations, y=AdamNewQG, col sep=comma] {PythonExperiment_Adamvs.AdamOldQGvs.AdamNewQGforRosenbrockFunctionswith20variables_LOSS.csv};
   \addlegendentry{naive Adam}   
   \addlegendentry{Adam via OldQG}
   \addlegendentry{Adam via NewQG}
\end{axis}
\end{tikzpicture}
\label{fig:subfig124}}
\caption{\protect\centering The Rosenbrock Functions with various variables}
\label{fig12}
\end{figure}

\section{Conclusion}
In this work, we proposed an enhanced Adam method via quadratic gradient and applied the quadratic gradient to the general numerical optimization problems. The quadratic gradient can indeed be used to build enhanced gradient methods for general optimization problems.
There is a good chance that quadratic gradient can also be applied to quasi-Newton methods, such as the famous BFGS method.

All the python source code to implement the experiments in the paper  is openly available at: \href{https://github.com/petitioner/ML.QuadraticGradient}{$\texttt{https://github.com/petitioner/ML.QuadraticGradient}$}  .

\end{document}